\DeclareMathOperator{\mmod}{mod}
\DeclareMathOperator{\ord}{ord}
\DeclareMathOperator{\chars}{char}
\DeclareMathOperator{\Res}{Re}
\DeclareMathOperator{\rank}{rank}
\DeclareMathOperator{\vol}{vol}
\newtheorem{lemma}{Lemma}
\newtheorem{theorem}{Theorem}
\newtheorem*{theorem*}{Theorem}
\newcommand*{\bfrac}[2]{\genfrac{}{}{0pt}{}{#1}{#2}}
\newcommand*{\h}{\hat{h}}{}{}{}
\global\long\def\le{\leqslant}
\global\long\def\ge{\geqslant}
\global\long\def\veps{\varepsilon}
\global\long\def\k{\kappa}
\global\long\def\l{\lambda}
\global\long\def\R{\mathbb{R}}
\global\long\def\F{\mathbb{F}}
\global\long\def\C{\mathbb{C}}
\global\long\def\Z{\mathbb{Z}}
\global\long\def\P{\mathbb{P}}
\global\long\def\G{\mathbb{G}}
\global\long\def\Q{\mathbb{Q}}
\global\long\def\Cc{\mathcal{C}}
\global\long\def\Oo{\mathcal{O}}
\global\long\def\M{\mathcal{M}}
\newtheorem{remark}{Remark}
\newtheorem{corollary} {Corollary}
\begin{document}

\title[A. Sedunova]{Bounds for the integral points on elliptic curves over function fields}

\author[A. Sedunova]{Alisa Sedunova}
\address{
Max Planck Institute, Bonn, Germany}
\email{alisa.sedunova@phystech.edu}

\date{}

\maketitle

\begin{abstract}
In this paper we give an upper bound for the number of integral points on an elliptic curve $E$ over $\F_q[T]$ in terms of its conductor $N$ and $q$. We proceed by applying the lower bounds for the canonical height that are analogous to those given by Silverman and extend the technique developed by Helfgott-Venkatesh to express the number of integral points on $E$ in terms of its algebraic $\rank$. We also use the sphere packing results to optimize the size of an implied constant. In the end we use partial Birch Swinnerton-Dyer conjecture that is known to be true over function fields to bound the algebraic $\rank$ by the analytic one and  apply the explicit formula for the analytic rank of $E$.
\end{abstract}

\section*{Introduction}
Let $q$ be a prime power and $K=\F_q[T]$ be the field of polynomials in formal variable $T$ with coefficients in a finite field $k=\F_q$ of order $p$.
Our main goal here is to prove the following theorem.
\begin{theorem} \label{maintheorem}
Let $E$ be an elliptic curve over $\F_q[T]$ of a conductor $N$. 
Assume that the integral points on $E$ are on minimal model. Then the number of integral points on $E$ satisfies
$$\#E(\F_q[T]) \leq  \exp \left( c \frac{\deg N_E}{ \log \deg N_E}\right),$$
where $c$ is an absolute constant and $N_E$ is the degree of the conductor of $E$.
\end{theorem}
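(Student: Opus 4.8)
The plan is to chain together three bounds. First, following Helfgott--Venkatesh, I would bound the number of integral points by $\exp(c_1 r)$, where $r := \rank E(\F_q[T])$ and $c_1$ is absolute. Second, I would pass from the algebraic rank to the analytic rank $\rho := \ord_{s=1} L(E,s)$ via $r \le \rho$, the half of the Birch--Swinnerton-Dyer conjecture known unconditionally over function fields (Artin--Tate on the associated elliptic surface $\mathcal{E} \to \P^1_{\F_q}$, combined with the Shioda--Tate formula). Third, I would bound the analytic rank by $\rho \le c_2 \deg N_E/\log\deg N_E$ from the explicit formula for $L(E,s)$. Composing the three inequalities gives the theorem with $c = c_1 c_2$.

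For the first step I would work in the Mordell--Weil lattice $\Lambda := E(\F_q[T])/E(\F_q[T])_{\mathrm{tors}}$ with the canonical height pairing, a positive definite form on $\Lambda\otimes\R\cong\R^r$; the torsion subgroup is bounded (uniformly in $q$ for non-isotrivial $E$) and contributes only a harmless factor. Two inputs are needed: (a) a Silverman-type lower bound $\hat{h}(P)\ge\kappa_0$ for every non-torsion integral point $P$ on the minimal model, with $\kappa_0>0$ absolute, so that integral points do not accumulate near the origin; and (b) a gap principle, namely that if $P\ne Q$ are integral and their images in $\Lambda\otimes\R$ subtend a small angle, then one of $P\pm Q$ is again (essentially) integral of much smaller height, which by (a) is impossible below an angular threshold $\delta$. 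Hence within any cone of angular radius $\delta$ the integral points, ordered by height, grow geometrically, and since each has canonical height at most $O(\deg N_E)$ (the classical height bound for integral points over function fields), each cone contains $O(\log\deg N_E)$ of them. Covering the unit sphere $S^{r-1}$ by $O(\delta^{-(r-1)})$ cones and summing yields a bound of the shape $(\mathrm{const})^{r}\log\deg N_E$; optimising the covering via sphere-packing and kissing-number estimates, as in Helfgott--Venkatesh, makes the base an absolute constant, and the stray factor $\log\deg N_E$ is absorbed, giving $\exp(c_1 r)$.

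For the third step I would use that $L(E,s)$ is a polynomial in $q^{-s}$ of degree $B = \deg N_E - 4$ by the Grothendieck--Ogg--Shafarevich formula on $\P^1_{\F_q}$, and that all of its inverse roots $\alpha_i$ satisfy $|\alpha_i|=q$ by Deligne's theorem (the Riemann hypothesis over function fields). Writing $\alpha_i = q\,e^{2\pi i\theta_i}$, the analytic rank equals $\#\{i:\theta_i=0\}$. Feeding the trivial bound $|\sum_i\alpha_i^n|\le Bq^n$ together with the sharper estimate for small $n$ — obtained from the Euler product, using that there are few places of each small degree and the Hasse bound on the fibral Frobenius traces — into an Erd\H{o}s--Tur\'an / positive-test-function inequality (the function-field incarnation of the explicit-formula bounds of Mestre and Brumer, now unconditional since RH is a theorem) gives $\rho\le c_2\deg N_E/\log\deg N_E$. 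With the second step this bounds $r$, and the theorem follows.

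The main obstacle is the first step: transplanting the Helfgott--Venkatesh machinery to characteristic $p$ with constants uniform in $q$, so that the final $c$ is absolute. This demands the Silverman-type lower bound (a) with an explicit $\kappa_0$, requiring control of the local heights at the places of bad reduction and at infinity; a clean gap principle (b), where the subtlety is that in characteristic $p$ multiplication maps can be inseparable and $E$ may be isotrivial, cases which must be isolated and treated separately; and a $q$-uniform count, of size $\exp(O(r))$ rather than a naive coordinate count of size $q^{O(1)}$, for the torsion points and for any low-height integral points not reached by the covering argument. The second and third steps are, by comparison, a matter of correctly assembling known theory (Artin--Tate, Shioda--Tate, Grothendieck--Lefschetz, Deligne) and carrying out the explicit-formula optimisation.
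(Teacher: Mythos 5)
Your three-step outline --- bound the integral points by $\exp(O(\rank E))$ via a covering argument in the Mordell--Weil lattice, then pass to $\rank E\le\rank_{an}E$ by Tate--Milne, then bound $\rank_{an}E$ by the explicit formula --- is exactly the paper's strategy, down to the Kabatiansky--Levenstein sphere-packing optimisation. The substantive gap is in your input (a). You posit a Silverman-type lower bound $\hat{h}(P)\ge\kappa_0$ with $\kappa_0$ absolute, but what is actually available over $\F_q[T]$ (Lemma~\ref{lowerbnbcanheight}, following Silverman and Hindry--Silverman) is $\hat{h}(P) > c^{m}\max\big(1,h(j(E))\big)$ with an absolute $0<c<1$ and $m$ the number of multiplicative places: an exponentially \emph{degrading} lower bound. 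That is precisely why the paper's intermediate count comes out in the shape $C^{\rank E+m}$ (and why Theorem~\ref{betaparam} carries a $C^{|S|}$ factor, coming from the slicing in Lemma~\ref{slicinglambda}), not $\exp(c_1\rank E)$. Since $m$ can be of order $\deg N$ while the target exponent is only $\deg N/\log\deg N$, this factor cannot simply be ``absorbed''; the paper confronts it via the inequality $c^{\rank E+m}\le c^{\rank_{an}E}$, which does not follow from $\rank E\le\rank_{an}E$ alone and requires its own justification. You flag this as the ``main obstacle'' but treat an absolute $\kappa_0$ as within reach, when in fact the $m$-dependence is the crux of the difficulty.

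A smaller discrepancy: your step three asserts $\rank_{an}E\ll\deg N/\log\deg N$ with an absolute implied constant, but Brumer's bound (\ref{brumer}) carries a $\log q$ in the numerator, so the unconditional output is $\deg N\log q/\log\deg N$ --- a mismatch with the stated theorem that the paper itself exhibits in its final display and that would need to be addressed (for instance by interpolating with the trivial bound $\rank_{an}E\le\deg N-4$ when $q$ is large) before $c$ can honestly be called absolute.
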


Notice, that we work in the context where the analogue of Siegel's theorem is true (it is proven in \cite{voloch}). In particular, if $E$ is an ellipic curve over $\F_q[T]$ parametrized by $a,b \in \F_q$, then $E(\F_q(T)) = E(\F_q)$ and $\# E(\F_q[T]) \leq q+1+2\sqrt{q}$.
For a more general function field $\F_q(C)$ with ring of integers $A$ we can have $E(A)$ infinite.
Notice that if $E$ is constant, i.e. defined over $\F_q$, then $E(\F_q(T)) = E(\F_q)$, therefore Siegel theorem holds in this case too. For the case of $E$ being isotrivial (not defined over $F_q$ and supersingular) Siegel theorem may be false.

The tools that allow us to proceed are that the necessary part of the famous Birch and Swinnerton-Dyer conjecture holds in the function field context, as well as the bounds for the analytic rank over a function field are known, thanks to the explicit formula given by Brumer in \cite{Brumer1992}. We also extend the technique of Helfgott (see \cite{Helfgott2004}) to obtain an upper bound for the number of integral points on $E$ in terms of its algebraic rank. However, this brings us to results that do depend on the curve. To get rid of this dependence we have to work with the estimation of the sort $\#E(\F_q[T]) \ll c^{\rank E+m}$ more carefully(here $m$ stands for the number of multiplicative places). Namely, we extend the method developed by Helfgott-Venkatesh in \cite{Helfgott2006} based on the ideas of Silverman \cite{Silverman}. We optimize the size of $c$ by applying sphere packing results of Kabatiansky and Levenshtein \cite{Kabatjanskii1978}.

The previously known bounds of such a type (see Theorem 1 of \cite{bombieripila}) give us $\#E(\Z \cap I^2) \ll |I|^{\frac{1}{3}+\veps}$, where we are restricted to counting integral points lying in a small box of size $|I|$, where $I$ is an 'interval' of polynomials defined in \cite{bombieripila}. This result is analogous to Bombieri-Pila theorem \cite{Bombieri1989}, that gives the upper bound $\ll N^{\frac{1}{d} + \veps}$, where $d$ is the degree of a curve and is equal to $3$ in the case of elliptic curves, however the method of getting it is different and mainly based on the ideas of Helfgott-Venkatesh \cite{Helfgott2006} and the interpolation part used by Heath-Brown \cite{Heath-Brown1994}. Here we take the approach proposed by Helfgott in \cite{Helfgott2006} and further developed by Helfgott-Venkatesh in \cite{Helfgott2006}, but it turns out that this way of doing things is closely related to the one used in \cite{Bombieri1989}.

The paper is organized as follows.
In Section 2 we review some basic definitions, that are going to be used throughout the paper as well as some important facts (see (\ref{bsdff}) and (\ref{brumer}), also (\ref{anbound})) that are crucial in our proof. Then we prove several standard results regarding canonical height on an elliptic curve $E$. Based on this we show how to get a cheap, but useless bound for the number of points in $E(\F_q[T])$ of a bounded height. We introduce local heights $\l_v(\cdot)$ to get rid of this problem and prove lower bounds for $\l_v(\cdot)$ under some 'good' slicing, that will bring us to another bound for the canonical height, namely Lemma \ref{proposition34}, that is proved in the spirit of \cite[Proposition 3.4]{Helfgott2006}. We also need a lower bound for the canonical height on $E$ due to Silverman, see \cite{Silverman}.

Further, in Section 3 we prove the bound for the number of $S$-integral points on $E$ in terms of algebraic rank of $E$ using Lemmas from previous sections together with sphere packing results by Kabatiansky and Levenstein \cite{Kabatjanskii1978}.
Finally, in Section 5 we prove the main result by taking an advantage of working in function fields, where Birch and Swinnerton-Dyer conjecture partly holds (see (\ref{bsdff})) and apply the explicit formula for an analytic rank, given in the expression (\ref{brumer}) by Brumer.

\section*{Auxiliary results}
We briefly review some tools that we use during the proof. For more detailed survey see the work of Ulmer \cite{Ulmer2011}.
Let $k=\F_q$ be the finite field of cardinality $q$, with its characteristics $\chars(k)=p$. We write $K$ for the function field of a smooth, projective absolutely irreducible curve $\Cc$ over $k$. In what follows we consider $\Cc=\P^1$, thus $K=\F_q[T]$ is the field of polynomials in a formal variable $T$ with coefficients lying in $k$. For $X\in K$ we denote by $|X|$ its norm: $|X|=q^{\deg X}$. 
We recall that an elliptic curve over $K$ is a smooth, projective, absolutely irreducible algebraic curve of genus 1 over $K$ with a $K$-rational point $\Oo$ that plays the role of identity element in the group $E(K)$ of $K$-rational points lying on $E$ (Mordell-Weil group of $E$). Lang and N\'{e}ron generalized the result of Mordell-Weil and proved that for a funstion field $K$ $E(K)$ is a finitely generated abelian group. As a consequence of this result the torsion group $E(K)_{\text{tors}}$ (i.e. the group of $K$-points on $E$ of finite order) is finite and isomorphic to a group of the form 
$$\Z/m\Z \times \Z/n\Z,$$
where $m$ divides $n$ and $p$ does not divide $m$. 
Define an algebraic $\rank(E)$ of an elliptic curve $E/K$ as the number of independent points of infinite order in $E(K)$, so to say  the number of copies of $\Z$ in $E(K)$. 

An equivalent definition of an elliptic curve $E/K$ can be given due to the Riemann-Roch theorem: an elliptic curve $E/K$ can always be described as a projective plane curve of degree 3 with a (homogeneous) Weierstrass equation
\begin{equation}\label{homWei}
y^2z+a_1 xyz+ a_3yz^2=x^3+a_2x^2z+a_4xz^2+a_6z^3,
\end{equation}
where all $a_i$ belong to $K$. As usually, the origin is the point at infinity, namely $\Oo=[0:1:0]$. The condition of smoothness of $E$ is equivalent to the fact that its discriminant $\Delta$ is not zero. 
The equation above can be also given in an affine form by the change of variables $(x,y) \to (x/z,y/z)$
\begin{equation}\label{Wei}
y^2+a_1 xy+ a_3y=x^3+a_2x^2+a_4x+a_6.
\end{equation}
Let $v$ be an equivalence class of valuations of $K$. 
Recall that a valuation on a field $K$ is a generalization of the $p$-adic norm. Concretely, it is a function $|\cdot|_v$ from a field $K$ to the real numbers $\R$ such that the following properties hold for all $x,y \in K$: 
\begin{itemize}
\item[$\circ$]{$|x|_v \geq 0$, $|x|=0$ if and only if $x=0$;}
\item[$\circ$]{$|xy|_v=|x|_v \cdot |y|_v$;}
\item[$\circ$]{$|x|_v \leq 1$ implies $|1+x|_v \leq C$ for some constant $C \geq 1$ independent of $x$.}
\end{itemize}
Notice that if a valuation $|\cdot|_v$ satisfies the last condition above with $C=2$, then it satisfies the triangle inequality $$|x+y|_v \leq |x|_v+|y|_v$$ for all $x,y \in K$ and such a valuation is called archimedean. If the condition is satisfied with $C=1$, then $|\cdot|_v$ satisfies the stronger ultrametric inequality: $$|x+y|_v \leq \max(|x|_v,|y|_v)$$ for all $x,y \in K$ and we call this valuation non-archimedean. Here we work only with non-archimedean valuations.

For every $v$ denote by $\Oo_{(v)}$ the ring of rational functions on $\Cc$ regular at $v$. In our case ($\Cc=\P^1$) the finite places correspond to monic irreducible polynomials $f\in K=\F_q[T]$. If such a place $v$ corresponds to $f$, then $$\Oo_{(v)} = \{g/h. \text{ s.t. } g,h\in K, \deg(g) < \deg(h)\}.$$ Assume that the degree of $v=\infty$ is 1. Write $\M_v \subset \Oo_{(v)}$ for the maximal ideal (its elements are the functions vanishing at $v$) and $\k_v=\Oo_{(v)}/\M_v$ for the residue field at $v$. Set $\deg(v)=[\k_v:k]$, $q_v=q^{\deg(v)}$ for the norm of $v$.
Choose a minimal integral model for $E$ in the form (\ref{Wei}). Let $\bar{a}_i \in \k_v$ be the reductions of the coefficients at $v$ and define the reduced curve $E_v$ by 
\begin{equation}\label{Weired}
E_v: \;y^2+\bar{a}_1 xy+ \bar{a}_3y=x^3+\bar{a}_2x^2+\bar{a}_4x+\bar{a}_6
\end{equation}
over the residue field $\kappa_v$. 
We say that $E_v$ has
\begin{itemize}
\item[$\circ$]{a good reduction at $v$ if $E_v$ defines an elliptic curve over $\k_v$ ($v \nmid \Delta$),}
\item[$\circ$]{a multiplicative (nodal) reduction at $v$ if $E_v$ has a node at $v$. If the tangent lines at the node are rational over the residue field $\kappa_v$, then we call this type of reduction split multiplicative. Otherwise non-split multiplicative.}
\item[$\circ$]{an additive (cuspidal) reduction at $v$ if $E_v$ has a cusp at $v$.}
\end{itemize}
Notice that terms multiplicative and additive are used here to emphasize that the non-singular part of the reduced curve defined by $E_v^*=E_v/\{\text{singular point}\}$ is isomorphic to $\G_m$ (or $\G_m[\cdot]$ for the non-split case) and $\G_a$ respectively (here $\G_m$ stands for the multiplicative group, $\G_m[\cdot]$ for the twisted multiplicative group and $\G_a$ for the additive group). Elliptic curves, $\G_a$, $\G_m$ and $\G_m[\cdot]$ over $K$ are the only irreducible algebraic curves over $K$ having group structures given by regular maps. 

The reduced curve $E_v$ may be singular, but yet the set of nonsingular points of $\tilde{E_v}(K_v)$ forms a group. Moreover $E(K)$ admits the following filtration of abelian groups
$$E_1(K)\subset E_0(K)\subset E(K),$$
where 
$E_0(K)=\{P \in E(K): P_v \in \tilde{E_v}(K_v)\}$ and $E_1(K)=\{P \in E(K): P_v=O_v\}$ with $P_v$ taken to be the image of $P \in E(K)$ under the reduction map $E(K) \to \tilde{E_v}(K_v)$.

A model for $E$ given by $E_v$ with its coefficients $\bar{a}_i \in \Oo_{(v)}$ is called integral at $v$. The minimal integral model at $v$ is the model $E_v$ with the valuation of the discriminant $\Delta$ of $E$ being minimal.
The local exponent $n_v$ of the conductor at $v$ is given by
\begin{equation*}
n_v=
\begin{cases} 0, & \mbox{if }E \mbox{ has good reduction at }v, 
\\ 1, & \mbox{if }E \mbox{ has multiplicative reduction at }v,
\\ 2+\delta_v, & \mbox{if }E \mbox{ has additive reduction at }v,
\end{cases}
\end{equation*}
where $\delta_v$ is the wild ramification
\begin{equation*}
\delta_v=
\begin{cases} 0, & \mbox{if }p>3, 
\\ \ge 0, & \mbox{if }p=2,3.
\end{cases}
\end{equation*}
Thus $n_v$ has the information about the ramification in the field extensions generated by the points of finite order in the group law of the elliptic curve $E$. The conductor of $E/K$ is given by a product of prime ideals and associated exponents $n_v$. The (global) conductor of $E$ is a divisor $N=\sum_v n_v [v]$. The degree of the conductor is $\deg N = \sum_v n_v \deg v$.
$N$ is an effective divisor on $\P^1$ which is divisible only by the places $v$ of bad reduction of $E$.
The $L$-function of $E$ is defined be the Euler product
\begin{equation} \label{lfun}
L(E,s)=\prod_{v\nmid N}^{\text{good}}\left(1-\frac{a_v}{q_v^{s}}+\frac{q_v}{{q_v}^{2s}}\right)^{-1} \times
\prod_{v\mid N}^{\text{mult}}\left(1-\frac{1}{q_v^{s}}\right)^{-1} 
%\prod_{v\mid N}^{\text{nsm}}\left(1-\frac{1}{q_v^{s}}\right)^{-1} \times
%\prod_{v\mid N}^{\text{ad}} 1,
\end{equation}
where "good" stands for "$E$ has a good reduction at $v$", "mult" -- for the case of either split multiplicative or non split multiplicative reduction at $v$
%"sm" -- for the case of "$E$ has a split multiplicative reduction at $v$", "nsm" -- for the non-split multiplicative reduction and, finally, "ad" means that the operation is taken over all places $v$ of the additive reduction of $E$ and,
and, finally, $a_v$ is an integer defined as
\begin{equation*}
a_v=
\begin{cases} q_v+1-\#E_v(k_v), &\mbox{if }E\mbox{ has good reduction at }v, 
\\ \pm 1, &\mbox{if }E\mbox{ has multiplicative reduction at }v,
\\ 0, &\mbox{if }E\mbox{ has additive reduction at }v.
\end{cases}
\end{equation*}
($a_v=1$ for the split multiplicative reduction and to $-1$ for the non split multiplicative reduction).
Due to the Hasse bound on $a_v$ the first product of (\ref{lfun}) converges absolutely for $\Res s > 3/2$ and admits a meromorphic continuation on $\C$.
As usually we define an analytic rank of $E/K$ as the order of vanishing of its $L$-function at $s=1$
\begin{equation*}
\rank_{an} (E) = \ord_{s=1} L(E,s).
\end{equation*}
We recall that an elliptic curve $E/K$ is called constant if it can be defined by a Weierstrass equation (\ref{Wei}) with coefficients belong to $k$. It is called non-constant if it is not constant. Also $E/K$ is called isotrivial if it becomes constant over some finite extension of $K$, otherwise -- non-isotrivial.
\begin{remark} In the non-constant case of $E$ Theorem 9.3 of \cite{Ulmer2011} gives us an upper bound of a type $\rank_{an} E \le N$.
\end{remark}
The famous conjecture of Birch and Swinnerton-Dyer connects the analytic behaviour of $L$-functions of elliptic curves with the group of $K$-rational points on $E/K$, in particular (among some other relations) it predicts that 
$$\rank_{an} (E) \stackrel{?}{=} \rank(E).$$
While the original conjecture remains unsolved, much more is known in this context for the case of function fields.
\begin{theorem*} [Tate \cite{Tate1966}, Milne \cite{Milne}]
Let $E$ be an elliptic curve over a function field $K$. Then 
\begin{equation} \label{bsdff}
\rank E \leq \rank_{an} E.
\end{equation}
\end{theorem*}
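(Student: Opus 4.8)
The plan is to recast both sides of (\ref{bsdff}) as invariants of an auxiliary surface and compare them through $\ell$-adic cohomology; this is the route of Tate's original argument, combined with the Shioda--Tate formula. First I would pass from $E/K$ to the relatively minimal smooth projective elliptic surface $f\colon\mathcal{E}\to\Cc$ over $k=\F_q$ whose generic fibre is $E$ (here $\Cc=\P^1$, but this is irrelevant to the argument). The Shioda--Tate theorem describes the N\'{e}ron--Severi group $\mathrm{NS}(\mathcal{E})$, up to finite index, as the orthogonal sum of the \emph{trivial lattice} $\mathcal{T}$ --- spanned by the zero section, a general fibre, and the non-identity components of the reducible fibres --- and a subgroup isomorphic to the Mordell--Weil group $E(K)$. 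Keeping track of the action of $G_k=\mathrm{Gal}(\bar k/k)$ so that everything is measured over $\F_q$ rather than over $\bar\F_q$, this gives $\rank\mathrm{NS}(\mathcal{E}/\F_q)=\rho_{\mathrm{triv}}+\rank E$, where $\rho_{\mathrm{triv}}$ is the combinatorial rank of $\mathcal{T}^{G_k}$ (counting $\mathrm{Frob}_q$-orbits of fibral components, plus the section and the fibre). On the analytic side I would invoke Grothendieck's cohomological formula for the $L$-function of the lisse sheaf $\mathcal{F}=R^1f_*\Q_\ell$ on the smooth locus of $f$: passing to its middle extension $j_*\mathcal{F}$ on $\bar\Cc$ produces exactly the Euler factors of (\ref{lfun}) --- the factor $(1-a_vq_v^{-s}+q_v^{1-2s})^{-1}$ at good $v$, $(1\mp q_v^{-s})^{-1}$ at multiplicative $v$, and the trivial factor at additive $v$, each being precisely the local invariant of $\mathcal{F}$ under local monodromy --- and identifies $L(E,s)$ with $\det(1-\mathrm{Frob}_q\,q^{-s}\mid H^1(\bar\Cc,j_*\mathcal{F}))$ (the $H^0$ and $H^2$ contributions vanish in the non-isotrivial case and are harmless otherwise). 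Since $\mathcal{F}$ is pure of weight $1$, $H^1(\bar\Cc,j_*\mathcal{F})$ is pure of weight $2$, so $\rank_{an}E=\ord_{s=1}L(E,s)$ equals the multiplicity of $q$ as an eigenvalue of $\mathrm{Frob}_q$ on it.

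With these two descriptions the comparison is short. The $\ell$-adic cycle class map gives an (unconditional) injection $\mathrm{NS}(\mathcal{E}/\F_q)\otimes\Q_\ell\hookrightarrow H^2(\bar{\mathcal{E}},\Q_\ell(1))^{G_k}$ of \'{e}tale cohomology; the Leray spectral sequence for $f$ identifies $H^1(\bar\Cc,j_*\mathcal{F})$, after the Tate twist, with the orthogonal complement of $\mathcal{T}\otimes\Q_\ell$ inside $H^2(\bar{\mathcal{E}},\Q_\ell(1))$, while $\mathcal{T}$ itself maps isomorphically onto a $G_k$-stable complement. Restricting the injection to the Mordell--Weil part $E(K)\otimes\Q_\ell$ therefore lands in $H^1(\bar\Cc,j_*\mathcal{F})^{G_k}$, so $\rank E\le\dim H^1(\bar\Cc,j_*\mathcal{F})^{\mathrm{Frob}_q=1}$. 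This \emph{geometric} multiplicity of the eigenvalue $1$ is at most the \emph{algebraic} multiplicity of $q$, which is $\rank_{an}E$ by the previous paragraph. Chaining the inequalities, $\rank E\le\dim H^1(\bar\Cc,j_*\mathcal{F})^{\mathrm{Frob}_q=1}\le\ord_{s=1}L(E,s)=\rank_{an}E$, which is (\ref{bsdff}). The constant and isotrivial cases either follow verbatim or are trivial, because then $\rank E=0$.

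The hard part is not this last linear-algebra chain but everything feeding into it: constructing $\mathcal{E}$ and the N\'{e}ron model of $E$ carefully enough to establish the exact matching between the Euler product (\ref{lfun}) and $\det(1-\mathrm{Frob}_q\,q^{-s}\mid H^1(\bar\Cc,j_*\mathcal{F}))$ at the additive and multiplicative places --- this is where the theory of vanishing cycles, local monodromy and N\'{e}ron models genuinely enters, and it is the technical heart of Tate's construction --- and then verifying that the $\rho_{\mathrm{triv}}$ produced by Shioda--Tate cancels cleanly against the ``trivial'' factor of the characteristic polynomial of $\mathrm{Frob}_q$ on $H^2(\bar{\mathcal{E}},\Q_\ell)$. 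A secondary subtlety is to work consistently over $\F_q$ rather than over $\bar\F_q$, so that it is $\rank E=\rank E(K)$ and not the geometric rank that appears; this is harmless for ranks, since a $G_k$-invariant divisor class becomes rational after multiplication by a suitable integer, but it must be stated. Finally I would note that the reverse inequality --- equality in (\ref{bsdff}) --- is genuinely deeper: it is equivalent to the Tate conjecture for divisors on $\mathcal{E}$, i.e. to finiteness of $\mathrm{Br}(\mathcal{E})$, which is exactly Milne's contribution; for the inequality stated here it is not needed.
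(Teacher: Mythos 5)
The paper does not prove this statement at all; it is cited as a black box from Tate and Milne, and the body of the paper never revisits it. So there is no ``paper proof'' to compare against. What you have written is, however, a correct and faithful summary of the standard proof --- indeed of Tate's own argument: pass to the relatively minimal elliptic surface, use Shioda--Tate to identify the Mordell--Weil rank with a piece of $\mathrm{NS}$, push that piece into $H^2(\bar{\mathcal{E}},\Q_\ell(1))$ via the (injective, unconditional) cycle class map, then compare against the cohomological expression for $L(E,s)$ coming from Grothendieck's formula applied to the middle extension of $R^1f_*\Q_\ell$. The pivot --- that the dimension of the Frobenius-invariant subspace of $H^1(\bar{\Cc},j_*\mathcal{F})(1)$ bounds $\rank E$ from above and is itself bounded by the algebraic multiplicity of the eigenvalue $1$, which equals $\ord_{s=1}L(E,s)$ --- is exactly the content of the one-sided inequality, and you rightly flag that the reverse inequality is the Artin--Tate / Milne direction, equivalent to finiteness of $\mathrm{Br}(\mathcal{E})$, and is \emph{not} needed here.

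Two small cautions if you were to expand this into a full proof. First, your closing sentence about the isotrivial case reads as if $\rank E = 0$ there; that is true for constant $E$ but not for isotrivial non-constant $E$, so for that intermediate case you must actually run the ``verbatim'' branch (the surface argument still works; one just has to track the extra $H^0/H^2$ terms in Grothendieck's formula, which shift the degree of the $L$-polynomial rather than the order of vanishing at $s=1$). Second, the matching of Euler factors to local monodromy invariants at the additive places needs the N\'{e}ron model / tame vs.\ wild inertia discussion spelled out, particularly in characteristics $2$ and $3$ where wild ramification contributes $\delta_v>0$; this is where most of the genuine work of \cite{Tate1966} lives, and your sketch correctly identifies it as the technical heart.
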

The usual technique for obtaining upper bounds of an analytic rank is using so-called explicit formula. We refer here to the result given by \cite{Brumer1992}.
\begin{theorem*} [Brumer \cite{Brumer1992}]
Let $E$ be an elliptic curve over $\F_q[T]$. Then its analytic rank is bounded by
\begin{equation} \label{brumer}
\rank_{an} E \leq \frac{(b_E -4)\log q}{2 \log b_E} + O \left(\frac{n_E \log^2 q}{\sqrt{q} \log^2 b_E}\right),
\end{equation}
where $b_E$ is the degree of $L$-function as a polynomial in $q^{-s}$.
\end{theorem*}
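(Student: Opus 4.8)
The plan is to reproduce the explicit-formula argument of Brumer. Since $K=\F_q(T)$ is a function field, after the substitution $u=q^{-s}$ the $L$-function becomes a polynomial $P(u)=\prod_{j=1}^{b_E}(1-\gamma_j u)$ with $P(0)=1$ (for non-constant $E$ one has $b_E=\deg N-4$ by Grothendieck--Ogg--Shafarevich), and by the Riemann Hypothesis for curves over finite fields all inverse roots satisfy $|\gamma_j|=q$. The functional equation relates $P(u)$ and $P(1/(q^2u))$, so the multiset $\{\gamma_j\}$ is stable under $\gamma\mapsto q^2/\gamma$, which equals $\bar\gamma$ because $|\gamma|=q$; writing $\gamma_j=q\,e^{i\theta_j}$, the $\theta_j$ thus occur in pairs $\pm\theta$, and since $s=1$ corresponds to $u=1/q$,
$$\rank_{an}E=\ord_{s=1}L(E,s)=\#\{j:\gamma_j=q\}=\#\{j:\theta_j=0\}.$$

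Next I would derive the explicit formula by matching the logarithmic expansions of the two descriptions of $L$. From the Euler product, $\log L(E,s)=\sum_v\sum_{n\ge1}\tfrac1n(\alpha_v^n+\beta_v^n)q_v^{-ns}$, where $\alpha_v,\beta_v$ are the local Frobenius numbers ($\alpha_v\beta_v=q_v$ and $\alpha_v+\beta_v=a_v$ at good $v$, $\{\alpha_v,\beta_v\}=\{a_v,0\}$ with $|a_v|=1$ at multiplicative $v$, and nothing at additive $v$), while from the factorization over the $\gamma_j$, $\log L(E,s)=-\sum_j\sum_{M\ge1}\tfrac1M\gamma_j^Mq^{-Ms}$. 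Both series converge absolutely for $\Res s>3/2$, so comparing coefficients of $q^{-Ms}$ gives, for every $M\ge1$,
$$-\sum_{j=1}^{b_E}\Big(\frac{\gamma_j}{q}\Big)^M=\sum_{\substack{v,\,n\ge1\\ n\deg v=M}}(\deg v)\,\frac{\alpha_v^n+\beta_v^n}{q_v^n}.$$
Then I would fix a nonnegative even trigonometric polynomial $\Phi(\theta)=\sum_{|M|\le L}\widehat\Phi(M)e^{iM\theta}\ge0$ of degree $L$ with $\widehat\Phi(0)=1$ (the Fej\'er kernel works; the extremal such polynomial is needed for the sharp constant), so that $\Phi(0)=\sum_{|M|\le L}\widehat\Phi(M)\asymp L$. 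Summing the inequality $0\le\Phi(\theta_j)$ over $j$, substituting the displayed identity at each frequency $1\le M\le L$, and dropping on the left all $j$ with $\theta_j\ne0$, gives
$$\rank_{an}E\cdot\Phi(0)\ \le\ b_E+\sum_{1\le M\le L}\widehat\Phi(M)\sum_{\substack{v,\,n\ge1\\ n\deg v=M}}(\deg v)\,\frac{\alpha_v^n+\beta_v^n}{q_v^n}.$$

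Finally I would estimate the double sum and optimize $L$. The diagonal terms ($n=1$, good $v$) are controlled by the Hasse bound $|a_v|\le2\sqrt{q_v}$ together with the prime polynomial theorem $\#\{v:\deg v=d\}=q^d/d+O(q^{d/2}/d)$, contributing $\ll\sum_{M\le L}|\widehat\Phi(M)|\,q^{M/2}\ll q^{L/2}$; the higher prime powers at good places and all powers at the multiplicative places are geometrically small (each multiplicative $v$ contributing $O(q_v^{-1})$) and, after the optimization below, assemble into the secondary term $O\big(n_E\log^2q/(\sqrt q\log^2 b_E)\big)$, where $n_E$ is the degree of the conductor. Dividing by $\Phi(0)\asymp L$ yields $\rank_{an}E\ll(b_E+q^{L/2})/L+(\text{error})$, and choosing $L$ so that $q^{L/2}\asymp b_E$, i.e. $L\asymp 2\log b_E/\log q$, reproduces the main term $\tfrac{b_E\log q}{2\log b_E}$ of the statement. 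The main obstacle is purely the constant: a crude Fej\'er kernel already gives a bound of exactly this shape, but pinning down the coefficient $\tfrac12$ (and the precise shift $b_E\mapsto b_E-4$, which reflects using the $\theta\mapsto-\theta$ symmetry to halve the effective range, together with the boundary/trivial-zero terms) requires the extremal nonnegative trigonometric polynomial of prescribed degree and a careful accounting of the multiplicative-place contributions that make up the error term.
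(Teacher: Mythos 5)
The paper does not supply a proof of this statement: it is imported verbatim as Brumer's theorem from \cite{Brumer1992}, so there is no in-paper argument to compare against. Judged on its own terms, your sketch is the right reconstruction of Brumer's method: the passage to the polynomial $P(u)$ of degree $b_E=\deg N-4$ (Grothendieck--Ogg--Shafarevich over $\P^1$), the Riemann Hypothesis placing the inverse roots on $|\gamma_j|=q$, the identification $\rank_{an}E=\#\{\theta_j=0\}$, the coefficient-by-coefficient explicit formula, a nonnegative even trigonometric polynomial of degree $L$ normalized by $\widehat\Phi(0)=1$, the Hasse bound together with the prime-polynomial count $q^d/d+O(q^{d/2}/d)$, and the optimization $q^{L/2}\asymp b_E$ giving $L\asymp 2\log b_E/\log q$. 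This correctly reproduces the shape $\rank_{an}E\ll b_E\log q/\log b_E$.

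Two places where the sketch is not yet a proof of the stated inequality. First, the sharp leading constant $\tfrac{1}{2}$ does not come for free from the Fej\'er kernel: a bare Fej\'er kernel gives $\Phi(0)=L+1$ and $\sum_{M\ge1}|\widehat\Phi(M)|q^{M/2}\asymp q^{L/2}$, which after balancing yields $b_E\log q/(2\log b_E)$ only up to a multiplicative constant, not the exact coefficient; one needs either the extremal nonnegative cosine polynomial of prescribed degree or Brumer's specific kernel, and you flag this but do not resolve it. Second, the error term requires more care than the phrase ``assemble into the secondary term'' suggests: the diagonal $n=1$ sum, after applying Hasse and the prime-polynomial theorem, leaves a genuine secondary contribution of size $q^{L/2}/\sqrt q$ from the error in the prime count, while the multiplicative places contribute $\ll m\le n_E$ before dividing by $\Phi(0)\asymp L$; tracking both through the choice $L\asymp 2\log b_E/\log q$ is exactly what produces the $n_E\log^2 q/(\sqrt q\log^2 b_E)$ shape, and that bookkeeping is the part you would still need to write out. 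With those two points filled in, the argument is sound and is the intended one.
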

For the case of $\F_q[T]$ we have 
$$b_E=n_E-4,$$
where $n_E=\deg N$ and $N$ is the conductor of an elliptic curve $E/K$.
We note that if $E$ has $a$ additive reductions and $m$ multiplicative reductions, then
$$n_E \leq 2a+m.$$
This result is interesting if and only if $n_E$ is rather big, since the trivial bound for the rank is $n_E + 4g_X-4$. We thus have
\begin{equation} \label{anbound}
\rank_{an} E \leq \frac{(\deg N - 8)\log q}{2 \log \deg N} + O \left(\frac{\deg N \log^2 q}{\sqrt{q} \log^2 \deg N}\right).
\end{equation}
The easy bound is
$$\rank E \leq \rank_{an} E \leq b_E = n_E-4.$$
If $E$ is constant, then $\rank E = 0$.

\section*{Heights and its properties}

Here we investigate some properties of height function on an elliptic curve $E$ over a field $K=\F_q[T]$. The crucial fact here is that $|\h-\frac{1}{2}h_x|$ and $|\h^E-\frac{1}{3}h_y|$ are bounded on the set of all points of $E$. This allows us to give a lower bound for $\h^E(P)$ as well as to estimate the number of points with $\h^E < c_2$ under condition that $E$ does not have any non torsion points $P$ with $\h^E(P) > c_1$. However, this path leads us to a problem that the bound would depend on the curve. To avoid this difficulty we will use local heights as in \cite{GrossSilv} and establish the bound $\l_v(P-Q) \geq \min(\l_v(P),\l_v(Q))$ that fails only in the case of bad reduction with which we will deal separately. We subdivide $E(K_v)$ into small enough number of slices, so that $\l_v(P-Q) \geq \min(\l_v(P),\l_v(Q))$ still holds true on these slices with $P$, $Q$ belong to the same slice (for more details see Lemma \ref{lambda>min} and Lemma \ref{slicinglambda}). Using that we prove that integral points we wish to count are far apart from each other in the Mordell-Weil lattice.
Recall that any elliptic curve over $K$ can be written in the following form 
\begin{equation} \label{weif}
E:\; y^2 = f(x),
\end{equation}
where $f(x) \in K$ is a cubic polynomial defined by Weierstrass equation.
We say that $d \in K$ is square free if it has no factor of the form $g^2$ with $g \in K$ and $\deg g \geq 1$.
For any $d \in K$ square free define a quadratic twist of $E$ as
\begin{equation} \label{twist}
E_d:\; dy^2 = f(x).
\end{equation}
Note that we restrict to the case of square free $d$, since if $d$ has a squared factor, then by a change of variables in
(\ref{twist}) one can find a curve $E_d^*$ isomorphic to $E_d$.
%%%Recall the definition of isomorphism of ecurves
We write $\h^E$ for the canonical height on an elliptic curve $E$, and $h_x$, $h_y$ for the height on $E$ with respect to $x$ and $y$:
\begin{equation} \label{canh}
\h^E((x,y))= \lim_{n \to \infty} \frac{1}{n^2} h_x([n](x,y)),
\end{equation}
where we use the notation $[n]P = \underbrace{P+\ldots+P}_{n \text{ times}}$
and
\begin{equation*}
\begin{split}
&h_x((x,y))=
\begin{cases} 0, & \mbox{if }P=\Oo, 
\\ \log_q H(x), & \mbox{otherwise},
\end{cases}\\
&h_y((x,y))=
\begin{cases} 0, & \mbox{if }P=\Oo, 
\\ \log_q H(y), & \mbox{otherwise}.
\end{cases}
\end{split}
\end{equation*}
For any $x \in K$ define its norm by $|x|=q^{\deg x}$. 
We notice that $\h^E$ is defined on all points of $E(\bar{K})$ and $\h$ is a positive definite quadratic form on $E(\bar{K})$ as well as on $E(K)$(in the sense that it maps non-torsion elements to positive numbers).

For $x=x_0/x_1$ with $x_0,x_1 \in K$ not having as polynomials any common factor other than a constant polynomial in $K$ (we encrypt this fact by $(x_0,x_1)_K={1}$), one can write $H(x)=\max(|x_0|,|x_1|)$. Let $L$ be any algebraic field extension of $\F_q[T]$. Define $H(y)$ by
\begin{equation*}
H(y)=(H_L(y))^{[L:K]^{-1}},\; H_L(y)=\prod_w \max (|y|_w^{n_w},1),
\end{equation*}
where $y \in L$, the product is taken over all places $w$ of $L$, $n_w$ stands for the degree of quotient field $L_w/K_w[T]$.
For example, if $y=\frac{y_0}{y_1}$ with $y_0,y_1 \in K$, then $y \in \F_q(T)$ and for $L=\F_q(T)$ $H(y)=H_L(y)=\max(|y_0|,|y_1|)$.
We list some important properties of the canonical height in the following lemma.
\begin{lemma} \label{hprop}
Let $f(x) \in K=\F_q[T]$ be a monic polynomial of non-zero discriminant in (\ref{weif}). Let also $d$ be a square-free polynomial $d \in K$ and $P=(x,y)$ be a $K$-point on the quadratic twist $E_d$ of $E$. Let $P'=(x,d^{1/2}y)$ be a point on $E_1=E$ associated to $P$.
Then
\begin{enumerate}
\item{$\h^{E_d}(P)=\h^E(P')$, where the canonical heights are defined on $E_d$ and $E$, respectively and, of course, $\deg f=3$.}
\item{The height $h_y$ $(y \neq 0)$ is bounded on $E$, namely
$h_y(P') \geq \frac{3}{8}\deg d.$}
\item{If $\deg f =3$, then $\h^{E_d}(P) \geq \frac{1}{8}\deg d + c_f$, where $c_f$ is a constant depending only on $f$.}
\end{enumerate}
\end{lemma}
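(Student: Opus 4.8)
The plan is to prove the three statements in order, since each feeds into the next. For (1), I would work directly from the limit definition (\ref{canh}) of the canonical height. The key observation is that the map $P = (x,y) \mapsto P' = (x, d^{1/2}y)$ is an isomorphism $E_d \to E$ defined over $K(d^{1/2})$, and it commutes with multiplication-by-$n$ on both sides. Since the $x$-coordinate is unchanged by this map, we have $h_x([n]P)$ computed on $E_d$ equals $h_x([n]P')$ computed on $E$ for every $n$; dividing by $n^2$ and passing to the limit gives $\h^{E_d}(P) = \h^E(P')$. One small point to check is that the canonical height is insensitive to the field of definition (it is defined on $E(\bar K)$), so evaluating $\h^E$ at $P'$ makes sense even though $P'$ need not be a $K$-point.

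For (2), I want a lower bound on $h_y(P')$ where $P' = (x, d^{1/2}y)$ and $(x,y)$ is a $K$-point on $dy^2 = f(x)$. The idea is to analyze the places dividing $d$. Write $Y = d^{1/2}y$, the $y$-coordinate of $P'$; then $Y^2 = d y^2$ where $y \in K$. At a place $v \mid d$ (which, $d$ being square-free, divides $d$ exactly once), the valuation of $Y^2$ is $\mathrm{ord}_v(d) + 2\,\mathrm{ord}_v(y^2\cdot\text{stuff})$... more precisely one compares $\mathrm{ord}_v$ of $f(x) = Y^2$, which is forced to behave in a controlled way because $f$ is monic with nonzero discriminant, so $\mathrm{ord}_v(f(x))$ is either $\le 0$ or a multiple of what the reduction permits. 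Pushing this through the product formula definition $H_L(Y) = \prod_w \max(|Y|_w^{n_w},1)$ over $L = K(d^{1/2})$, the contribution of the places above each $v \mid d$ accumulates to at least $\tfrac{3}{8}\deg d$ after dividing by $[L:K]$. I expect the fraction $\tfrac{3}{8}$ to come from a $\tfrac{3}{4}$-type contribution (three roots of $f$, each contributing a half-integer valuation defect) halved by the degree-two extension; this bookkeeping at the ramified and bad places is where I anticipate the main difficulty, and one must be careful that nothing is lost at the place at infinity or at places where $x$ itself has large denominator.

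For (3), I would combine (1), (2), and the standard comparison between the canonical height and the naive $y$-height. By (1), $\h^{E_d}(P) = \h^E(P')$. On $E$, the function $\h^E - \tfrac{1}{3}h_y$ is bounded (this is the comparison between canonical and Weil heights for the degree-$3$ coordinate $y$, which holds with a bound $c_f$ depending only on the coefficients of $f$, i.e. only on $f$); hence $\h^E(P') \ge \tfrac{1}{3}h_y(P') + c_f'$ for a constant $c_f'$ depending only on $f$. Plugging in the bound $h_y(P') \ge \tfrac{3}{8}\deg d$ from (2) yields $\h^{E_d}(P) \ge \tfrac{1}{3}\cdot\tfrac{3}{8}\deg d + c_f' = \tfrac{1}{8}\deg d + c_f$, as claimed. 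The only subtlety is to make sure the height-comparison constant genuinely depends only on $f$ and not on $d$ — but that is exactly why we transported everything to the fixed curve $E$ via part (1) before invoking the comparison, so the constant $c_f$ is uniform over all twists.
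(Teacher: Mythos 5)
Your parts (1) and (3) are essentially the paper's argument. For (1) the paper verifies compatibility of the twist map with the group law by explicit computation with the duplication formula and then iterates, whereas you simply invoke that the twist is a group isomorphism over $K(d^{1/2})$; both are fine, yours is just less explicit. For (3) you combine (1), (2), and the boundedness of $\h^E-\tfrac13 h_y$ exactly as the paper does, and your remark that transporting to the fixed curve $E$ via (1) is what makes $c_f$ independent of $d$ is precisely the point.

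Part (2), however, is a genuine gap. You explicitly concede that you do not know how the bookkeeping goes, and your heuristic for where $\tfrac38$ comes from (``three roots of $f$, each contributing a half-integer valuation defect, halved by the degree-two extension'') is not what actually happens and will not reproduce the bound. The paper's argument is elementary and has a different structure: write $y=y_0/y_1$ in lowest terms in $K$, observe that since $d$ is squarefree any irreducible $g$ dividing $\gcd(d,y_1^2)$ divides it to the first power only and divides $y_1$, and use the equation $dy^2=f(x)$ (with $f$ monic cubic) to rule out that $g$ appears with too negative a power, concluding $|y_1|\ge|\gcd(d,y_1^2)|^{2}$. One then expresses the two local contributions to $H(d^{1/2}y)$ — roughly $|y_0|\,|d|^{1/2}|\gcd(d,y_1^2)|^{-1/2}$ and $|y_1|\,|\gcd(d,y_1^2)|^{-1/2}\ge|\gcd(d,y_1^2)|^{3/2}$ — and \emph{optimizes} the max over the size $g=|\gcd(d,y_1^2)|$: the minimum of $\max\bigl(|d|^{1/2}g^{-1/2},\,g^{3/2}\bigr)$ is attained at $g=|d|^{1/4}$ and equals $|d|^{3/8}$. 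So the exponent $\tfrac38$ is the output of a balancing argument between numerator and denominator, not a count of roots. Without identifying and carrying out this optimization your proof of (2), and hence of (3), does not close.
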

\begin{proof}
{\bf 1.} We do not put any change in the $x$-coordinate, so clearly $h_x(P)=h_x(P')$. For the sake of simplicity we consider the case of $\chars k \neq 2,3$. The proof goes analogously in the characteristics 2 and 3. Under this assumption we can write an equation of $E$ in so-called short Weierstrass form (see, for example, Theorem 2.1 in \cite{Milne2006})
\begin{equation}\label{weishort}
E: y^2=x^3+ax+b,\; a,b \in K.
\end{equation}
Then the duplication law on $E$ is given by 
\begin{equation} \label{duplic}
[2]P=P+P=\left(\frac{(3x^2+a)^2-8xy^2}{4y^2}, \frac{F_{a,b}(x)}{(2y)^3}\right),
\end{equation}
where $F_{a,b}(x)=x^6+5ax^4+20bx^3-5a^2x^2-4abx-a^3-8b^2$.
The short Weierstrass equation for the twisted curve $E_d$ is given by the change of variables $(x,y) \to (dx,d^2y)$
$$E_d: y^2=x^3 +ad^2 x +bd^3.$$
Write $X(P)$ and $Y(P)$ for the coordinate functions of $P$. Then
\begin{equation*}
X([2]P')=\frac{(3x^2+a)^2-8dxy^2}{4dy^2} \text{ and } X([2]P)=\frac{(3x^2+a)^2-8dxy^2}{4y^2}.
\end{equation*}
Thus $X([2]P')=X((P+P)')$.
Further,
\begin{equation*}
Y([2]P)=\frac{F_{a,b}(x)}{(2y)^3} \text{ and } Y([2]P')=\frac{F_{a,b}(x)}{d^{\frac{3}{2}}(2y)^3},
\end{equation*}
which shows that $Y([2]P')=Y((P+P)')$. We conclude that $(P+P)'=P'+P'$. Notice that here the addition is made on $E_d$ on the left hand side and on $E$ on the right hand side.
Iterating this and using (\ref{canh}) we get
$$\h(P)=\frac{1}{2}\lim_{n \to \infty} \frac{h_x([2^n]P)}{2^{2n}} = 
\frac{1}{2}\lim_{n \to \infty} \frac{h_x([2^n]P')}{2^{2n}} = \h(P').$$
{\bf 2.}
Write $y=\frac{y_0}{y_1}$ for $y_0,y_1 \in K$, such that they do not have any common factor $g \in K$ of a positive degree. For $a,b \in K$ we denote by $\langle a,b \rangle =\langle a,b \rangle_K$ the biggest common factor (in the sense that there is no other polynomial $g \in K$ of a bigger degree, such that $g$ is a factor of both $a$ and $b$) of polynomials $a,b$. We have $\langle y_0,y_1 \rangle_{K}=1$ and we call such polynomials coprime. 
If $g$ is a monic irreducible polynomial, such that $g$ is a factor of $\langle d,y_1^2 \rangle$, then $g^2$ can not be a factor of $\langle d,y_1^2 \rangle$ (by the fact that $d$ is a square free polynomial), but it is a factor of $y_1$. 
Hence, if $g$ is not a factor of $\langle d, y_1^2 \rangle$, then write
$$\langle d, y_1^2 \rangle = \frac{d y_1^2}{\{d,y_1^2\}},$$
where $\{d,y_1^2\}$ is a minimal polynomial that has both $d$ and $y_1^2$ as factors. Then using the fact that $y_0$ and $y_1$ are taken to be coprime we conclude that $g$ has a power $-1$ as a factor of $dy^2= dy_0 y_1^{-2} = d^2 y_0^2 \langle d,y_1^2\rangle^{-1}\{d,y_1^2\}^{-1}$. Recall that $P$ lies on our curve $E$, so $dy^2=f(x)$ and if $g$ has a non-negative degree as a factor of $x$, then it also has a non-negative degree as a factor of $dy^2$. But if $g$ has a negative degree as a factor of $x$, then its degree in $dy^2$ drops to $\leq -3$ leaving us with a contradiction.
Therefore we conclude that $|y_1| \geq \langle d, y_1^2\rangle^2$.
Since $y \in K$ we can write by the definition of $H(y)$ and considering the Euclidean norm
\begin{equation*}
\begin{split}
H(y)&=\max\left(|y_0||d^{-1}\langle d,y_1^2 \rangle|^{-\frac{1}{2}},|y_1||\langle d,y_1^2 \rangle|^{-\frac{1}{2}}\right) \\
& \geq \max \left(|y_0||d^{-1}\langle d,y_1^2 \rangle|^{-\frac{1}{2}},|\langle d,y_1^2 \rangle|^{\frac{3}{2}}\right)
 \geq |d|^{\frac{3}{8}},
\end{split}
\end{equation*}
where we used the fact that $\max$ gets its minimal value when $|\langle d,y_1^2 \rangle|=|d|^{\frac{1}{4}}$.
Finally, $h_y(P)=\log H(P) \geq \frac{3}{8} \log q^{\deg d} = \frac{3}{8} \deg d$.

{\bf 3.} It is a simple consequence of 1 and 2. If $P'$ is a point on $E=E_1$, then, by 1 $\h^{E_d}(P)=\h^{E}(P')$. The difference $|\h^E-h_x^E|$ is bounded on $E$, thus by application of second part of \ref{hprop} the result follows.
\end{proof}
\begin{corollary}
Let $E$ be an elliptic curve over $K=\F_q[T]$. If there are no non-torsion points $P \in E(K)$ of a canonical height $\h(P) > c_1$, then there are at most 
$$O\left(\left(1+2\sqrt{\frac{c_2}{c_1}}\right)^{\rank E}\right)$$
points in $E(K)$ of a canonical height $<c_2$.
\end{corollary}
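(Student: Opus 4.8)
The plan is to run the classical volume (sphere--packing) bound in the Mordell--Weil lattice. Set $r=\rank E$ and let $\Lambda=E(K)/E(K)_{\mathrm{tors}}$, which is free of rank $r$ by Lang--N\'{e}ron. Since $\h=\h^E$ is a positive definite quadratic form on $E(\bar K)$ that vanishes on torsion --- and the associated symmetric pairing $\langle P,Q\rangle=\tfrac12\bigl(\h(P+Q)-\h(P)-\h(Q)\bigr)$ kills $E(K)_{\mathrm{tors}}$ --- it descends to a positive definite quadratic form on $\Lambda$. So I would regard $V:=\Lambda\otimes_{\Z}\R\cong\R^{r}$ as a Euclidean space with $\h(P)=\|P\|^{2}$, inside which $\Lambda$ is a full--rank lattice. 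One may assume $r\ge 1$ (otherwise $E(K)=E(K)_{\mathrm{tors}}$ is finite and the bound is trivial) and $0<c_1\le c_2$.

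Next I would record what the hypothesis provides: no non-torsion point of $E(K)$ has canonical height below $c_1$, i.e.\ every nonzero $v\in\Lambda$ satisfies $\|v\|^{2}=\h(v)\ge c_1$, so any two distinct points of $\Lambda$ are at Euclidean distance $\ge\sqrt{c_1}$ in $V$. Write $N=\#\{v\in\Lambda:\h(v)<c_2\}=\#\bigl(\Lambda\cap B(0,\sqrt{c_2})\bigr)$, where $B(x,\rho)$ is the open ball of radius $\rho$. Around each such $v$ I would place the ball $B\bigl(v,\tfrac12\sqrt{c_1}\bigr)$; by the separation just noted these are pairwise disjoint (open balls of radius $\tfrac12\sqrt{c_1}$ whose centres lie at distance $\ge\sqrt{c_1}$ are disjoint by the triangle inequality), and every one of them is contained in $B\bigl(0,\sqrt{c_2}+\tfrac12\sqrt{c_1}\bigr)$. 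Comparing Lebesgue volumes in $\R^{r}$, with the volume of the unit ball cancelling, gives
\[
N\cdot\Bigl(\tfrac12\sqrt{c_1}\Bigr)^{r}\ \le\ \Bigl(\sqrt{c_2}+\tfrac12\sqrt{c_1}\Bigr)^{r},
\qquad\text{i.e.}\qquad
N\ \le\ \Bigl(1+2\sqrt{c_2/c_1}\,\Bigr)^{r}.
\]

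Finally I would pass back from $\Lambda$ to $E(K)$: the quotient map $E(K)\to\Lambda$ is $\#E(K)_{\mathrm{tors}}$--to--one and $\h$ is constant on each fibre (a torsion coset, on which $\h$ equals $\h$ of the corresponding class), so
\[
\#\{P\in E(K):\h(P)<c_2\}\ =\ \#E(K)_{\mathrm{tors}}\cdot N\ \le\ \#E(K)_{\mathrm{tors}}\cdot\Bigl(1+2\sqrt{c_2/c_1}\,\Bigr)^{r},
\]
which is $O\bigl((1+2\sqrt{c_2/c_1})^{r}\bigr)$ since $E(K)_{\mathrm{tors}}$ is finite. I do not expect a genuine obstacle here; the argument is just the elementary volume--packing estimate. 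The only points needing care are the strict/non--strict inequalities in the ball count and an honest accounting of the torsion factor (which is precisely why the conclusion is stated up to an implied constant). The sharper sphere--packing input of Kabatianskii--Levenshtein is not needed at this stage: it will enter only later, to reduce the base of the exponential.
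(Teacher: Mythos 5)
Your proposal is correct and follows essentially the same sphere-packing/volume-comparison argument as the paper: embed the Mordell–Weil lattice modulo torsion into $\R^{\rank E}$ with $\h=\|\cdot\|^2$, place disjoint balls of radius $\tfrac12\sqrt{c_1}$ around points of height $<c_2$, contain them all in a ball of radius $\sqrt{c_2}+\tfrac12\sqrt{c_1}$, and compare volumes. Your write-up is slightly more careful than the paper's (which does not explicitly track the torsion factor and has a garbled description of the embedding), but the method is identical.
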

\begin{proof}
Let's take our canonical height to the square of the Euclidean norm. There is one to one correspondence $f: K^{\rank E} \to K^{\rank E}$ such that $\h^E(\P)=|f(\P)|^2$ for all vectors $\P \in K^{\rank E}$ of the length $\rank E$ with coordinates in $K$. Since $\h^E(P) > c_1$ for all non-zero $P \in K$, then we are equipped by $f(K^{\rank E})$ with a lattice $L$, such that for every element $l \in L$ different from $0$ we have $|l| \geq c_1^{\frac{1}{2}}$. For every point $l \in L$ draw a sphere $Sp_l$ centred at $l$ of the radius $\frac{1}{2}c_1^{\frac{1}{2}}$, so that they do not overlap. Each of the spheres $Sp_l$ is contained in the bigger one $Sp$ with the radius $c_2^{\frac{1}{2}}+\frac{1}{2}c_1^{\frac{1}{2}}$ centred at the origin.
By bounding the total volume of all spheres by $\vol(Sp) \leq (c_2^{\frac{1}{2}}+\frac{1}{2}c_1^{\frac{1}{2}})^{\rank E}$ we end the proof.
\end{proof}
The implied constants $c_1$, $c_2$ do not have any dependency on the twist, but depend on the curve. This would bring us to a problem once we want to bound the canonical height in terms of naive height (namely, we want something of the sort $h(P) \leq c_3$, where $h(P)$ is the naive height and $c_3$ is an absolute constant), because then the constant inside big $O$ will change to $(1+2 \sqrt{c_3 / c_1})^{\rank E}$, where $c_1$ depends only on the curve, whilst $c_3$ depends on both the curve and $c_2$ (say, $c_2=c_3+O_E(1)$). To avoid this difficulty we have to exclude the hidden dependency by the method proposed in \cite{Helfgott2006}.

Recall that $\k_v$ is the residue field at $v$ and $d_v=\deg(v)=[\k_v:k]$. Let $M_k$ be the set of places $v$ on $K$. For each place $v \in K$, there exists a natural local height function $\l_v$ such that the canonical height on $E$ can be given in terms of $\l_v$
$$\h^E(P) = \frac{1}{[K:\Q]} \sum_{v \in M_K}d_v \l_v(P).$$
We say that an elliptic curve $E$ over a non-archimedean local field $K$ has potentially good reduction if it has a model with good reduction in some extension of $K$. Similarly, $E$ has potentially multiplicative reduction if it does not have potentially good reduction.

\begin{lemma}\label{lambda>min}
Let $E$ be an elliptic curve over a non-archimedean local field $K_v$ with potentially good reduction. Let $P,Q \in E(K_v)$ be two distinct points. Then
$$\l_v(P-Q) \geq \min(\l_v(P),\l_v(Q)).$$
\end{lemma}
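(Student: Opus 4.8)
The plan is to reduce the inequality to the case of genuine good reduction by a base change, and then to use the explicit description of the local height in terms of the formal group (equivalently, the filtration $E_1(K_v) \subset E_0(K_v) \subset E(K_v)$). Since $E$ has potentially good reduction, there is a finite extension $L/K_v$ over which $E$ acquires good reduction; the local height is compatible with such base change up to the normalizing factor $[L:K_v]$, so it suffices to prove the claimed inequality for an elliptic curve with good reduction over $L$, and then descend. Thus I would first record the behaviour of $\l_v$ under extension of the ground field and reduce to the good-reduction case.

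In the good-reduction case the local height function $\l_v$ has a clean shape: after choosing a minimal Weierstrass model, $\l_v(P)$ is (up to normalization) $\tfrac12\max\{0,-v(x(P))\}$, i.e. it is zero precisely on the points whose reduction is not the identity, and for $P \in E_1(K_v)$ it equals $-\tfrac12 v(x(P))$ and can be computed in the formal group. First I would dispose of the easy cases: if either $P$ or $Q$ has $\l_v = 0$, the right-hand side is $0$ and there is nothing to prove since $\l_v \ge 0$ away from a bounded correction — more precisely one uses that on a model with good reduction $\l_v \ge 0$, so the statement is immediate unless both $P$ and $Q$ reduce to $\Oo_v$. So the core case is $P, Q \in E_1(K_v)$, where I can work entirely inside the formal group $\hat{E}(\M_v)$.

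In the formal group, writing $z(P) = -x(P)/y(P)$ for the standard parameter, one has $\l_v(P) = -v(z(P))$ (up to the harmless constant), and the key is the formal-group estimate $v\big(z(P \ominus Q)\big) \le \max\{v(z(P)), v(z(Q))\}$, with equality unless $v(z(P)) = v(z(Q))$. This is exactly the ultrametric/non-archimedean behaviour of the group law $F(X,Y) = X + Y + (\text{higher order})$: the leading terms of $z(P)$ and $z(Q)$ either fail to cancel (giving equality) or cancel, in which case the difference is governed by higher-order terms and hence has even larger valuation. Translating back through $\l_v = -v(z(\cdot))$ turns "$\le \max$ of valuations" into "$\ge \min$ of local heights", which is the assertion. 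Then I would push this through the base change: $\l_v^{L}(P-Q) \ge \min(\l_v^L(P), \l_v^L(Q))$ over $L$ descends to $K_v$ after dividing by $[L:K_v]$, since all three terms scale by the same factor.

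The main obstacle I anticipate is bookkeeping rather than conceptual: pinning down the precise normalization of $\l_v$ (the additive constant and the factor of $\tfrac12$ versus $\tfrac16$, depending on whether one follows the $x$- or $y$-coordinate convention used earlier for $h_x$, $h_y$) so that the "$\l_v \ge 0$ away from $E_1$" statement and the formal-group computation are genuinely consistent, and checking that the potentially-good base change does not disturb minimality of the model in a way that spoils these formulas. Once the normalization is fixed, the formal-group inequality $v(z(P\ominus Q)) \le \max(v(z(P)),v(z(Q)))$ is the only real content, and it follows from the shape of the group law together with the fact that $v$ is a discrete valuation.
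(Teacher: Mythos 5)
Your argument follows the same route as the paper's: pass to an extension $L_w/K_v$ where $E$ acquires good reduction, use the explicit Gross--Silverman formula $\l_v(P)=\tfrac12\max(\log|x(P)|_w,0)$ (equivalently $\l_v(P)=-\log|z(P)|_w$ on $E_1$), and conclude from the non-archimedean inequality. The paper compresses the final step to a one-line appeal to the ultrametric bound, whereas you spell out the case split (one of $\l_v(P),\l_v(Q)$ vanishes versus both points in $E_1$) and the formal-group computation on $z=-x/y$ that actually delivers the estimate; up to the usual sign bookkeeping for the valuation convention, your reasoning is correct and fills in exactly what the paper leaves implicit.
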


\begin{proof}
Consider an extension $L_w$ of $K_v$ on which $E$ has good reduction. Choose a Weierstrass equation for $E$ over $L_w$ such that $v(\Delta)=0$. Then by \cite[Proposition 2]{GrossSilv} we find that 
$$\l_v(P)=\l_w(P)=\frac{1}{2} \max (\log|x(P)|_w,0).$$
Since $v$ is non-archimedean, then $|x+y|_v \leq \max({|x|_v,|y|_v})$ and the claim follows.
\end{proof}
The following lemma is \cite[Lemma 3.2]{Helfgott2006} and the proof is completely analogous. 
\begin{lemma} \label{slicinglambda}
Let $E$ be an elliptic curve over a non-archimedean local field $K_v$ with potentially multiplicative reduction. Then for any $\veps > 0$ small enough, there is a subdivision
$$E(K_v)=W_{v,0} \cup W_{v,1} \cup \ldots \cup W_{v,d_v} \ll |\log \veps|,$$
such that for any two distinct points $P,Q \in W_{v,0}$ we have
$$\l_v(P-Q) \geq \min(\l_v(P),\l_v(Q)),\;\;\;\; \l_v(P_1), \l_v(P_2) \geq 0,$$
and  for any two distinct points $P,Q \in W_{v,j}$, where $1 \leq j \leq d_v$ we have
\begin{equation*}
\begin{split}
&\l_v(P-Q) \geq (1-\veps)\max(\l_v(P),\l_v(Q)),\\
&\l_v(P-Q) \geq (1-2\veps)\max(\l_v(P),\l_v(Q)),
\end{split}
\end{equation*}
where the implied constant is absolute.
\end{lemma}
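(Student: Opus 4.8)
The plan is to follow the proof of \cite[Lemma 3.2]{Helfgott2006} and check that every step goes through over the equal-characteristic local field $K_v$. Since $E/K_v$ has potentially multiplicative reduction its $j$-invariant is non-integral at $v$, so there is an extension $L_w/K_v$ of degree at most $2$ over which $E$ becomes a Tate curve: a uniformisation $E(\bar L_w)\cong\bar L_w^{\times}/q_E^{\Z}$ with $0<|q_E|<1$, say $w(q_E)=m>0$. Such uniformisations exist over any complete discretely valued field, in any residue characteristic, exactly as over $\Q_p$. Local heights are compatible with base change — this is \cite[Proposition 2]{GrossSilv}, already used in Lemma~\ref{lambda>min}, giving $\l_v(P)=\l_w(P)$ for $P\in E(K_v)$ — so it suffices to produce the partition of $E(L_w)$ with the stated inequalities for $\l_w$ and then intersect it with $E(K_v)$. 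From here on we work over $L_w$, abbreviating $\l_w,|\cdot|_w$ to $\l,|\cdot|$.

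For a point $P$ with Tate parameter $u=u_P$ normalised by $|q_E|<|u|\le1$, put $t(P)=\log|u|/\log|q_E|\in[0,1)$, the ``level'' of $P$ (one of finitely many rationals with denominator $m$ or $2m$). Up to the normalisation of $\l$ fixed above, the N\'eron local height on the Tate curve is
\begin{equation*}
\l(P)=-\tfrac12\,B_2\!\big(t(P)\big)\log|q_E|\;-\;\log|1-u|\;-\;\sum_{n\ge1}\big(\log|1-q_E^{\,n}u|+\log|1-q_E^{\,n}u^{-1}|\big),\qquad B_2(x)=x^2-x+\tfrac16,
\end{equation*}
derived exactly as over $\Q_p$. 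On the identity component $|u|=1$ (so $P\in E_0$) the two series vanish termwise and $\l(P)=-\tfrac1{12}\log|q_E|-\log|1-u|\ge0$ (recall $\log|q_E|<0$ and $|1-u|\le1$), which is the situation of Lemma~\ref{lambda>min}. Off the identity component the series again vanish and $|1-u|=1$, so $\l(P)=-\tfrac12B_2(t(P))\log|q_E|$ depends only on the level; since $B_2$ lies in $[-\tfrac1{12},\tfrac16]$ on $[0,1]$ with maximum $\tfrac16$ at the endpoints, $\l(P)\le-\tfrac1{12}\log|q_E|$ always, the value $-\tfrac1{12}\log|q_E|$ being approached only for levels near $0$ and $1$, and $\l(P)<0$ exactly for levels in the central band where $B_2<0$.

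Now the partition. Let $W_{v,0}$ be the points on the identity component; for distinct $P,Q\in W_{v,0}$ the difference $P-Q$ has parameter $u_Pu_Q^{-1}$, still on the identity component, and since $1-u_Pu_Q^{-1}=u_Q^{-1}\big((u_Q-1)-(u_P-1)\big)$ the ultrametric inequality gives $\log|1-u_Pu_Q^{-1}|\le\max(\log|1-u_P|,\log|1-u_Q|)$, whence $\l(P-Q)\ge\min(\l(P),\l(Q))$; together with $\l\ge0$ on $W_{v,0}$ this is the first assertion, verbatim as in Lemma~\ref{lambda>min}. The remaining points are sorted by their level into intervals $W_{v,j}$ accumulating at the two endpoints of $[0,1)$, of geometrically growing widths — from $\asymp\veps$ at an endpoint out to $O(1)$ — plus a bounded number of coarse intervals covering the central band where $B_2<0$. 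The mechanism is that if $P,Q$ share a near-endpoint piece of width $\delta$ at distance $a$ from the endpoint, then $\mathrm{level}(P-Q)=\{t(P)-t(Q)\}$ lies within $\delta$ of that endpoint, so $B_2(\mathrm{level}(P-Q))\ge(1-6\delta)\tfrac16$ while $B_2(t(P)),B_2(t(Q))\le(1-6a+\cdots)\tfrac16$; choosing $\delta$ so that $1-6\delta\ge(1-\veps)(1-6a)$ forces the $a$'s to grow geometrically — which is precisely what keeps the number of pieces at $O(|\log\veps|)$ — and, via $\l=-\tfrac12B_2(t(\cdot))\log|q_E|$ off the identity component, translates into $\l(P-Q)\ge(1-\veps)\max(\l(P),\l(Q))$; on the central band $\l\le0$ and the bound is immediate because $\l(P-Q)\ge0$ there. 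Keeping track of the $\veps$-loss in the two cases produces the constants $1-\veps$ and $1-2\veps$.

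The step I expect to be the genuine obstacle is this last one: engineering the $O(|\log\veps|)$-piece partition of $(0,1)$ so that the count is \emph{absolute}, i.e. independent of the local invariant $m=w(q_E)$ — which is unbounded as $v$ ranges over the places of bad reduction and must not appear in the number of pieces — while keeping every comparison among $\l(P),\l(Q),\l(P-Q)$ multiplicatively tight; this is the combinatorial heart of \cite[Lemma 3.2]{Helfgott2006}. The remaining work is to confirm that none of Helfgott's estimates depends on the residue characteristic $p$ or on $q_v$, together with the routine point that the passage to split multiplicative reduction, when $p=2$, may require a wildly ramified quadratic $L_w/K_v$; one checks that both the identity $\l_v=\l_w$ and the explicit Tate-curve height formula survive this unchanged, as in \cite{GrossSilv}.
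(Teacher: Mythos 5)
The paper's own proof is a one-line citation of \cite[Lemma 3.2]{Helfgott2006} with the assertion that the argument transfers to $\F_q[T]$; your proposal carries out that transfer along the intended lines (Tate uniformisation over an at-most-quadratic extension, the Bernoulli-polynomial formula for the local height, a geometrically widening subdivision of the level interval $[0,1)$), so this is essentially the same approach and the sketch is sound. The one thing worth adding is that the step you flag as the likely obstacle --- keeping the piece-count absolute, i.e.\ independent of $m=w(q_E)$ --- is not in fact a difficulty: the $W_{v,j}$ with $j\ge1$ are cut out by intervals of $[0,1)$ whose endpoints depend only on $\veps$, while $m$ controls only which rationals $k/m$ the levels $t(P)$ may occupy, not how many intervals are needed. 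Your width condition $\delta \lesssim a(1-\veps)+\veps/6$ at distance $a$ from an endpoint already gives, by a short induction, $a_j+\veps/6\le 2^{j}\,\veps/6$, so $O(|\log\veps|)$ pieces suffice to reach the coarse central band, uniformly in $m$ and in the residue characteristic; the combinatorics you were worried about is contained in the inequality you wrote down, and what remains is bookkeeping.
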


Now we have to adapt \cite[Proposition 3.4]{Helfgott2006}, that will serve us for as a bound for the canonical height that does not depend on the curve any longer. Here we assume that our two points are of the same reduction as well as that they fall into the same $W$-class, so we can apply Lemma \ref{slicinglambda}.

Since we are working in $K=\F_q[T]$, we don't have any archimedean valuations and thus, the proof can be significantly simplified.
\begin{lemma} \label{proposition34}
Let $E$ be an elliptic curve over $K$. Let $S$ be a finite set of places of $K=\F_q[T]$, that includes all irreducible divisors of the discriminant $\Delta$ of $E$. Let $P_1$, $P_2$ be two distinct integral points on $E$ that belong to the same set $W_{v,i}$ for any place $v$ among the ones with potentially multiplicative reduction. Suppose that
$$\sum_{v \in T} d_v |\l_v(P_1)-\l_v(P_2)| \leq \veps \max_{j=1,2} \sum_{v \in T} d_v \l_v(P_j),$$
where $\veps >0$ sufficiently small and 
$$T = \{v \in S: \l_v(P_1), \l_v(P_2) \geq 0\}.$$ 
Assume that $P_1$ and $P_2$ have the same reduction modulo $I$, where $I$ is any ideal not divisible by irreducible elements of $S$. Then
$$\hat{h}(P_1-P_2) \geq (1-2\veps) \max (\hat{h}(P_1),\hat{h}(P_2))+ \frac{\log N I}{[K:L]}.$$
\end{lemma}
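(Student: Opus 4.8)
The plan is to work with the local decomposition of the canonical height, $\hat h(P)=\frac{1}{[K:\Q]}\sum_{v\in M_K}d_v\l_v(P)$, to bound $\l_v(P_1-P_2)$ from below at each place $v$, and then to sum. Since $P_1,P_2$ are $\F_q[T]$-integral their only pole is at $\infty$, so we may assume $\infty\in S$; at every finite place $v$ of good reduction one has $|x(P_i)|_v\le 1$ and hence $\l_v(P_i)=0$, so in fact $\hat h(P_i)=\frac{1}{[K:\Q]}\sum_{v\in S}d_v\l_v(P_i)$. Accordingly I split $M_K$ into (i) the places $v\notin S$ with $v\nmid I$, (ii) the places $v\mid I$, and (iii) the places $v\in S$.

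For (i): such a $v$ has good reduction, so Lemma \ref{lambda>min} gives $\l_v(P_1-P_2)\ge\min(\l_v(P_1),\l_v(P_2))=0$, and these places only help. For (ii): since $I$ is coprime to $S$, every $v\mid I$ is a place of good reduction, and $P_1\equiv P_2\pmod I$ means $P_1$ and $P_2$ have the same image modulo $\M_v^{n_v}$ with $n_v=v(I)$; working in the formal group at $v$ this forces $x(P_1-P_2)$ to have a pole of order at least $2n_v$ at $v$, whence $\l_v(P_1-P_2)\ge n_v$ in the chosen normalization, and summing over $v\mid I$ produces exactly the term $\log NI$ (divided by the normalizing constant written $[K:L]$ in the statement).

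For (iii), the places $v\in S$, I invoke Lemmas \ref{lambda>min} and \ref{slicinglambda}. If $E$ has potentially good reduction at $v$ then $\l_v(\cdot)=\frac{1}{2}\max(\log|x(\cdot)|_w,0)\ge 0$ identically, so $v\in T$ and $\l_v(P_1-P_2)\ge\min(\l_v(P_1),\l_v(P_2))$. If $E$ has potentially multiplicative reduction at $v$, then since $P_1,P_2$ lie in the same $W_{v,i}$, either $i=0$, in which case $\l_v(P_1),\l_v(P_2)\ge 0$ (so again $v\in T$) and $\l_v(P_1-P_2)\ge\min(\l_v(P_1),\l_v(P_2))$, or $1\le i\le d_v$, in which case Lemma \ref{slicinglambda} yields the stronger $\l_v(P_1-P_2)\ge(1-2\veps)\max(\l_v(P_1),\l_v(P_2))$ with no constraint on signs. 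In particular every $v\in S\setminus T$ is of this last type, so the favourable bound already holds there. Letting $j$ be the index with $\hat h(P_j)=\max(\hat h(P_1),\hat h(P_2))$, using $\min(\l_v(P_1),\l_v(P_2))=\l_v(P_j)-|\l_v(P_1)-\l_v(P_2)|$ at the ``$\min$''-places (all of which lie in $T$), and feeding in the hypothesis $\sum_{v\in T}d_v|\l_v(P_1)-\l_v(P_2)|\le\veps\max_i\sum_{v\in T}d_v\l_v(P_i)$, the contributions combine into
$$\sum_{v\in S}d_v\l_v(P_1-P_2)\ \ge\ (1-2\veps)\sum_{v\in S}d_v\l_v(P_j),$$
and adding parts (i) and (ii) and dividing by the normalizing constant gives the claim.

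The crux is the last paragraph: one must arrange the places of $S$ so that every place where only the weak bound $\l_v(P_1-P_2)\ge\min(\l_v(P_1),\l_v(P_2))$ is available lies in $T$ — where the closeness hypothesis can be used to replace $\min$ by $(1-2\veps)\max$ in the aggregate — while every other place of $S$ already satisfies $\l_v(P_1-P_2)\ge(1-2\veps)\max(\l_v(P_1),\l_v(P_2))$. This is precisely why the slicing of Lemma \ref{slicinglambda} was set up beforehand: at a potentially multiplicative place one has no control over $\l_v(P_1-P_2)$ without first subdividing $E(K_v)$, and it is exactly this control, together with the sign information $\l_v(P_i)\ge 0$ on $W_{v,0}$ and at the potentially good places, that makes the error term match the $\veps$ in the hypothesis and leaves the clean constant $1-2\veps$ (and the intact $\log NI$) in the conclusion. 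The remaining steps — checking that the places outside $S$ and those dividing $I$ do not erode these bounds, and that the normalizations ($d_v$-weights, the logarithm base, the factor $[K:L]$) are consistent throughout — are routine.
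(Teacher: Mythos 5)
Your overall plan — decompose $\hat h(P_1-P_2)$ by places into (i) places outside $S$ not dividing $I$, (ii) places dividing $I$, (iii) places in $S$, then invoke Lemma~\ref{lambda>min} and Lemma~\ref{slicinglambda} at the $S$-places — is the same decomposition the paper uses, and parts (i) and (ii) are handled correctly (the pole-order-$\geq 2n_v$ computation at $v\mid I$ giving $\log NI$ is exactly the paper's closing step). The gap is in the aggregation at step (iii). You split $S$ into ``min-places'' $S^{\min}$ (potentially good, or $W_{v,0}$) and the rest, and try to absorb the error $\sum_{v\in S^{\min}}d_v\,|\l_v(P_1)-\l_v(P_2)|$ into a budget $2\veps\sum_{v\in S^{\min}}d_v\,\l_v(P_j)$. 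But the hypothesis only controls $\sum_{v\in T}d_v|\l_v(P_1)-\l_v(P_2)|$ by $\veps\max_i\sum_{v\in T}d_v\l_v(P_i)$, and this bound lives at the level of all of $T$, which strictly contains $S^{\min}$. If $T\setminus S^{\min}$ (potentially multiplicative $W_{v,i}$, $i\geq 1$, with $\l_v\geq 0$) carries most of the mass, then $2\veps\sum_{S^{\min}}d_v\l_v(P_j)$ can be smaller than the error allowed by the hypothesis, and your claimed inequality $\sum_{v\in S}d_v\l_v(P_1-P_2)\geq(1-2\veps)\sum_{v\in S}d_v\l_v(P_j)$ does not follow. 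Concretely: take $T=\{v_1,v_2\}$ with $v_1\in S^{\min}$, $v_2\notin S^{\min}$, $\l_{v_1}(P_1)=10$, $\l_{v_1}(P_2)=5$, $\l_{v_2}(P_1)=\l_{v_2}(P_2)=100$, $S\setminus T=\emptyset$, $\veps=0.05$; the hypothesis holds ($5\leq 0.05\cdot 110$), your bound gives $5+0.9\cdot 100=95$, but the target is $0.9\cdot 110=99$.

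The repair is to aggregate over all of $T$ at once, which is what the paper does. At $v\in T$ you always have $\l_v(P_1-P_2)\geq(1-\veps)\min(\l_v(P_1),\l_v(P_2))$: at potentially good or $W_{v,0}$ places this is weaker than $\min\geq\min$, and at $W_{v,i}$ ($i\geq 1$) places it follows from the sharper $(1-\veps)\max$ inequality of Lemma~\ref{slicinglambda} since $\max\geq\min\geq 0$ on $T$. Summing gives $\sum_{v\in T}d_v\l_v(P_1-P_2)\geq(1-\veps)\sum_{v\in T}d_v\min(\l_v(P_1),\l_v(P_2))$; rewriting $\min=\max-|\l_v(P_1)-\l_v(P_2)|$ and applying the hypothesis converts this to $(1-\veps)^2\sum_{v\in T}d_v\max(\l_v(P_1),\l_v(P_2))\geq(1-2\veps)\sum_{v\in T}d_v\max(\l_v(P_1),\l_v(P_2))$, after which the $T$-term combines cleanly with the $(1-2\veps)\max$ bound on $S\setminus T$ to give $(1-2\veps)\max_j\sum_{v\in S}d_v\l_v(P_j)$. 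Note also that the statement ``$\min(\l_v(P_1),\l_v(P_2))=\l_v(P_j)-|\l_v(P_1)-\l_v(P_2)|$'' (with $j$ the global maximizer) is only an inequality in general — it is an equality only when $\l_v(P_j)$ happens to be the local maximum — which is another sign that the absorption should be done in terms of the local $\min/\max$ rather than by committing to a fixed global index $j$ at the level of individual places.
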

\begin{proof}
If $v$ is a finite place of good reduction, then $\l_v(P) \geq 0$. Recall that $S$ contains all places that divide the discriminant $\Delta$ of $E$. Then by definition of a canonical height through local heights we have
\begin{equation*}
\begin{split}
\h(P_1-P_2) &\geq  \sum_{v \in S} d_v \l_v(P_1-P_2) + \sum_{v \notin S} d_v \l_v(P_1-P_2) \\
&=  \sum_{v \in S} d_v \l_v(P_1-P_2) +  \sum_{\bfrac{v \text{ finite}}{v(I) > 0}} d_v \l_v(P_1-P_2).\\
\end{split}
\end{equation*}
We now subdivide our set $S$ as $S = T \cup S/T$, where $T$ is defined in the statement of the lemma. Let us consider two differences
\begin{equation*}
\begin{split}
&\sigma_1=\sum_{v \in T}d_v \l_v(P_1-P_2)-(1-\veps)\sum_{v\in T}d_v \min (\l_v(P_1), \l_v(P_2)),\\
&\sigma_2=\sum_{v \in S/T}d_v \l_v(P_1-P_2)-(1-2\veps)\max_{j=1,2}\sum_{v\in S/T}d_v \l_v(P_j).
\end{split}
\end{equation*}
The goal now is to show that these two quantities $\sigma_1, \sigma_2 \geq 0$. Once we are done it remains to consider only finite places $v$, such that $v(I) > 0$.
We use the following notations $\sum^{\text{good}}, \sum^{0}, \sum^{j}$ denote that $P_1,P_2$ are of potentially good reduction, potentially multiplicative reduction and fall into $W_{v,0}$, potentially multiplicative reduction and fall into $W_{v,j}$ with $j > 0$ respectively. 
By Lemma \ref{lambda>min} and Lemma \ref{slicinglambda} we have
\begin{equation*}
\begin{split}
\sigma_1 &\geq  \sum_{v \in T}^{good,0}d_v \min_{j=1,2}\l_v(P_j)+(1-\veps)\sum_{v \in T}^{j} d_v \max_{j=1,2}\l_v(P_j) -(1-\veps)\sum_{v \in T}d_v \min_{j=1,2}\l_v(P_j)\\
&= \veps \sum_{v \in T}d_v \min_{j=1,2}\l_v(P_j) - \veps \sum_{v \in T}^{j}d_v \max_{j=1,2}\l_v(P_j)
+\sum_{v \in T}^{j}d_v \big(\max_{j=1,2}\l_v(P_j)-\min_{j=1,2}\l_v(P_j)\big)\\
&\geq \veps \sum_{v \in T}d_v \min_{j=1,2}\l_v(P_j) - \veps \sum_{v \in T}^{j}d_v \max_{j=1,2}\l_v(P_j)\\
&=\veps \sum_{v \in T}^{good,0}d_v \min_{j=1,2}\l_v(P_j)+\veps \sum_{v \in T}^{j}d_v \big(\min_{j=1,2}\l_v(P_j)-\max_{j=1,2}\l_v(P_j)\big)\\
&=\veps \sum_{v \in T}^{good,0}d_v \min_{j=1,2}\l_v(P_j)-\veps \sum_{v \in T}^{good,0}d_v\big(\min_{j=1,2}\l_v(P_j)-\max_{j=1,2}\l_v(P_j)\big)\\
&+\veps \sum_{v \in T}d_v \big(\min_{j=1,2}\l_v(P_j)-\max_{j=1,2}\l_v(P_j)\big)\\
&=\veps \sum_{v \in T}^{good,0}d_v\max_{j=1,2}\l_v(P_j)+\veps \sum_{v \in T}d_v \big(\min_{j=1,2}\l_v(P_j)-\max_{j=1,2}\l_v(P_j)\big).\\
\end{split}
\end{equation*}
Now we apply the assumption of our lemma and get
\begin{equation*}
\begin{split}
\sigma_1 &\geq \veps \sum_{v \in T}^{good,0}d_v\max_{j=1,2}\l_v(P_j)-\veps^2 \sum_{v \in T}d_v\max_{j=1,2}\l_v(P_j)\\
&=(\veps-\veps^2) \sum_{v \in T}^{good,0}d_v\max_{j=1,2}\l_v(P_j)-\veps^2  \sum_{v \in T}^{j}d_v\max_{j=1,2}\l_v(P_j) \geq 0
\end{split}
\end{equation*}
by choosing $\veps$ small enough. Applying the same condition again we get
\begin{equation*}
\begin{split}
\sum_{v \in T}d_v \l_v(P_1-P_2) &\geq (1-\veps) \sum_{v \in T} d_v\max_{j=1,2}\l_v(P_j)\\
&+(1-\veps)\sum_{v \in T}d_v\big(\min_{j=1,2}\l_v(P_j)-\max_{j=1,2}\l_v(P_j) \big)\\
&\geq  (1-\veps) \sum_{v \in T} d_v\max_{j=1,2}\l_v(P_j) + \sum_{v \in T}d_v\big(\min_{j=1,2}\l_v(P_j)-\max_{j=1,2}\l_v(P_j) \big) \\
&\geq (1-\veps) \sum_{v \in T} d_v\max_{j=1,2}\l_v(P_j) -\veps \sum_{v \in T} d_v\max_{j=1,2}\l_v(P_j) \\
&\geq (1-2\veps) \sum_{v \in T} d_v\max_{j=1,2}\l_v(P_j).
\end{split}
\end{equation*}
Similarly for $\sigma_2$
\begin{equation*}
\begin{split}
&\sigma_2=\sum_{v\in S/T}^{good}d_v \big(\min_{j=1,2}\l_v(P_j)-(1-2\veps)\max_{j=1,2}\l_v(P_j) \big) > 0\\
\end{split}
\end{equation*}
with $\veps$ being small enough. Combining estimates for $\sigma_1, \sigma_2$ and using the fact that
$$\sum_{v \in T} d_v \max_{j=1,2} \l_v(P_j) \geq \max_{j=1,2} \sum_{v \in T} d_v \l_v(P_j)$$
one can see that
\begin{equation*}
\begin{split}
\sum_{v \in S}d_v \l_v(P_1-P_2) &\geq (1-2\veps) \sum_{v \in T} d_v\max_{j=1,2}\l_v(P_j) +(1-2\veps) \max_{j=1,2} \sum_{v \in S/T} d_v\l_v(P_j) \\
&\geq (1-2\veps) \max_{j=1,2} \sum_{v \in S} d_v \l_v(P_j).
\end{split}
\end{equation*}
Since $S$ contains all places that do divide the discriminant, then we have
$$\l_v(P) = \frac{1}{2} \log^+(|x(P)|_v)=0, \text{ for } v \notin S.$$ 
Then
$$\h_K(P_1-P_2) \geq (1-2\veps)\max_{j=1,2} \h_K(P_j) + \sum_{\bfrac{v \text{ finite}}{v(I) > 0}}d_v \l_v(P_1-P_2).$$
It remains to consider only finite places $v$, such that $v(I) > 0$. Let $\mathfrak{p}_v$ be the corresponding prime ideal in $O_K$ with its multiplicity $n_v$ in $I$. By reduction modulo $\mathfrak{p}_v^{n_v}$ our point $P_1-P_2$ becomes an origin $O$. Then 
$$v(x(P_1-P_2)) \leq -2n_v$$ 
and 
$$\l_v(P_1-P_2) \geq \frac{n_v}{e_v} \log p_v,$$
where $e_v$ is the ramification degree of $K_v$ and $p_v$ is the rational irreducible element under $v$.
Thus
$$\sum_{\bfrac{v \text{ finite}}{v(I) > 0}}d_v \l_v(P_1-P_2) = \log NI.$$
\end{proof}
We are going to exploit Lemma \ref{proposition34} to give an upper bound on the number of $S$-integral points. In order to get a good constant $C$, that appears in the main result of this paper we are going to apply sphere packings. We first subdivide the set of integer points on $E$ into "good slices" and then apply sphere packing bounds to each part separately.
Here we use the remarkable result of Kabatiansky and Levenstein (see, for example, \cite{Kabatjanskii1978}).

\begin{lemma}\label{kab-lev} [Kabatiansky-Levenstein \cite{Kabatjanskii1978}]
Let $A(n, \theta)$ be the maximal number of points that can be arranged on the unit sphere of $\R^n$ such that the angle between $P_1$, $O$ and $P_2$ for any two $P_1, P_2$ of them is no smaller than $\theta$. Then for $0 < \theta < \frac{\pi}{2}$
$$\frac{1}{n} \log_2 A(n, \theta) \leq \frac{1+\sin \theta}{2\sin \theta} \log_2 \frac{1+\sin \theta}{2\sin \theta} - \frac{1-\sin \theta}{2\sin \theta} \log_2 \frac{1-\sin \theta}{2\sin \theta} + o(1),$$
where the convergence is uniform and explicit for $\theta$ within any closed subinterval of $\left(0, \frac{\pi}{2}\right)$. In particular, for $\theta = \frac{\pi}{3}$, we have
$$\frac{1}{n}\log_2A(m,\theta) \leq 0.40141\ldots$$
\end{lemma}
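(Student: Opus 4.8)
The plan is to treat this as a genuinely external input: the statement is precisely the Kabatiansky–Levenstein bound on the maximal cardinality $A(n,\theta)$ of a spherical code with minimal angular separation $\theta$, so the sensible approach is to cite their original linear-programming argument rather than reprove it from scratch. The structure I would present is as follows. First I would recall the linear-programming (Delsarte-type) framework on the sphere $S^{n-1}$: any function $f(t)=\sum_k f_k G_k^{(n)}(t)$ expanded in Gegenbauer polynomials $G_k^{(n)}$ with $f_k\ge 0$ for $k\ge 1$, $f_0>0$, and $f(t)\le 0$ on the interval $[-1,\cos\theta]$, yields the bound $A(n,\theta)\le f(1)/f_0$. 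Second, I would describe the choice of test function: Kabatiansky and Levenstein take $f$ built from the Jacobi polynomial $P_k^{(\alpha,\beta)}$ with a suitable parameter and a double node, producing (after optimizing the degree $k$ and using the known asymptotics of the largest zero of Jacobi polynomials in terms of Bessel function zeros) exactly the exponential rate
\[
\frac{1+\sin\theta}{2\sin\theta}\log_2\frac{1+\sin\theta}{2\sin\theta}-\frac{1-\sin\theta}{2\sin\theta}\log_2\frac{1-\sin\theta}{2\sin\theta}.
\]
Third, I would note the uniformity claim: since all the estimates on Jacobi-polynomial zeros and on the Gegenbauer coefficients are effective and locally uniform in $\theta$, the $o(1)$ term is uniform on closed subintervals of $(0,\pi/2)$; this is exactly what \cite{Kabatjanskii1978} establishes.

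For the final numerical assertion I would simply substitute $\theta=\pi/3$, so $\sin\theta=\sqrt3/2$, $(1+\sin\theta)/(2\sin\theta)=(2+\sqrt3)/(2\sqrt3)$ and $(1-\sin\theta)/(2\sin\theta)=(2-\sqrt3)/(2\sqrt3)$, and evaluate
\[
\frac{2+\sqrt3}{2\sqrt3}\log_2\frac{2+\sqrt3}{2\sqrt3}-\frac{2-\sqrt3}{2\sqrt3}\log_2\frac{2-\sqrt3}{2\sqrt3}=0.40141\ldots,
\]
a routine computation. I would also remark that for the application in this paper one does not even need the sharp constant — any bound of the form $A(n,\pi/3)\le 2^{cn}$ with $c<1/2$ would suffice to beat the trivial $(1+2\sqrt{c_2/c_1})^{\rank E}$ count coming from the Corollary above — but using the optimal exponent is what lets us pin down the constant $c$ in Theorem \ref{maintheorem}.

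Since the proof is essentially a citation, there is no real obstacle inside this lemma; the only thing to be careful about is stating the hypotheses so they match what is actually used later, namely that the points we feed in are the normalized images in the Mordell–Weil lattice $\mathbb{R}^{\rank E}$ of integral points lying in a common slice, that distinct such points subtend an angle $\ge\pi/3$ at the origin (this is where Lemma \ref{proposition34} enters, via $\h^E(P_1-P_2)\ge\max(\h^E(P_1),\h^E(P_2))$ up to the $\veps$-loss, which forces $\langle P_1,P_2\rangle\le\tfrac12\max(\|P_1\|^2,\|P_2\|^2)$ and hence $\cos\angle\le\tfrac12$ after a symmetrization/rescaling argument), and that $n=\rank E$ may be taken large, so the $o(1)$ is harmless. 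The one genuine subtlety worth flagging is that $A(n,\theta)$ counts points at angle \emph{at least} $\theta$ while our height inequality is not symmetric in $P_1,P_2$; this is handled exactly as in \cite{Helfgott2006} by splitting the lattice points into dyadic height ranges and applying the bound within each range, so that within a range the vectors have comparable norms and the asymmetry disappears up to a factor absorbed into $o(1)$.
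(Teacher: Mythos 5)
Your proposal correctly recognizes that the paper supplies no internal proof of this lemma: it is stated purely as a citation to Kabatiansky--Levenshtein, and your sketch of the Delsarte linear-programming machinery with Jacobi-polynomial test functions, together with the numerical evaluation at $\theta=\pi/3$, accurately reflects the cited argument. This is the same approach as the paper takes, so there is nothing to reconcile.
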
 

\begin{lemma} \label{slicepack35}
Let $c_1,c_2$ be two positive real numbers, $0 < \veps < \frac{1}{2}$, $n$ is a non-negative integer. For $\vec{X}=(X_i)_{1 \leq i \leq n} \in \F_q^n[T]$ consider 
$$S = \{\vec{X} \in \F_q^n[T] \; c_1 \leq |\vec{X}| \leq c_2\},$$ where $|\vec{X}|=\sum_{i=1}^{n} |X_i| = \sum_{i=1}^{n} q^{\deg X_i}$. Then there is a subset $T \subset \F_q^n[T]$ such that
$$\#T \leq C^n \veps^{-(n+1)}\left(1+\log \frac{c_2}{c_1}\right),$$
where the implied absolute constant $C$ is explicit and the balls $B(\vec{Y}, \veps |\vec{Y}|)$ cover all of $S$ for $\vec{Y} \in T$.
\end{lemma}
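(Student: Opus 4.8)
The plan is to cover $S$ by balls in a dyadic-scale fashion: first stratify $S$ according to the ``vector of degrees'' of the coordinates, and within each stratum use a rescaled version of a fixed covering of a unit cube by small balls. Concretely, for a point $\vec X=(X_1,\dots,X_n)\in\F_q^n[T]$ with $|\vec X|=\sum_i q^{\deg X_i}$ in the prescribed range $[c_1,c_2]$, record the tuple $\vec\delta=(\deg X_1,\dots,\deg X_n)\in(\Z_{\ge0}\cup\{-\infty\})^n$. The number of distinct tuples $\vec\delta$ that can occur is controlled because $\sum_i q^{\delta_i}\le c_2$ forces each $\delta_i\le \log_q c_2$, but this crude count is too lossy; instead I group the $\vec\delta$'s into \emph{scales}: say $\vec\delta$ has scale $k$ if $2^k\le |\vec X|<2^{k+1}$ (equivalently if $\sum_i q^{\delta_i}$ lies in that dyadic window). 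The number of relevant scales is $\ll 1+\log(c_2/c_1)$, which is where that factor in the statement comes from. Within a single scale, $|\vec X|$ varies by at most a factor of $2$, so a ball of radius $\veps|\vec Y|$ centred at a suitable $\vec Y$ of the same scale has radius comparable to the common size of the stratum, and it suffices to cover, for each scale, the finite set of lattice-like points of that size by $\ll C^n\veps^{-(n+1)}$ such balls.

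The combinatorial heart is the following: fix a scale, so $|\vec X|\asymp 2^k$; I must choose centres $\vec Y$ so that every $\vec X$ in the scale lies within $\veps|\vec Y|$ of some centre. Since $|\vec Y|\asymp 2^k$ too, this is a covering of a set of diameter $\ll 2^k$ by balls of radius $\asymp\veps 2^k$, and a standard volume/greedy argument gives a covering of size $\ll (1/\veps)^{n}$ times the number of ``degree profiles'' $\vec\delta$ compatible with that scale. The extra profiles are themselves counted by a stars-and-bars estimate: the number of $\vec\delta\in\Z_{\ge0}^n$ with $\sum_i q^{\delta_i}\le 2^{k+1}$ but lying in the given dyadic window is at most $\binom{n+r}{n}$ for a bounded $r$, hence $\ll C_0^n$ for an absolute $C_0$. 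Multiplying the per-scale count $C_0^n(1/\veps)^n$, the extra $\veps^{-1}$ slack that lets centres be chosen with coordinates in $\F_q[T]$ rather than in a real vector space (rounding each coordinate costs one more factor of $\veps^{-1}$ uniformly), and the number $\ll 1+\log(c_2/c_1)$ of scales, yields the claimed bound $\#T\le C^n\veps^{-(n+1)}(1+\log(c_2/c_1))$ with $C$ explicit.

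Let me record the steps in order. First I would define the scale decomposition $S=\bigsqcup_{k} S_k$ with $S_k=\{\vec X\in S: 2^k\le|\vec X|<2^{k+1}\}$ and bound the number of nonempty $S_k$ by $O(1+\log(c_2/c_1))$. Second, for a fixed $k$, I would partition $S_k$ further by degree profile $\vec\delta$ and bound the number of profiles by $C_0^n$ via stars and bars. Third, for each $(k,\vec\delta)$ I would exhibit an explicit covering of the corresponding cube $\prod_i\{X_i: \deg X_i=\delta_i\}$ (a subset of a real box of side $\asymp 2^{k}$) by $\ll\veps^{-n}$ balls of radius $\tfrac12\veps\,2^k$, e.g.\ by taking centres on a grid of mesh $\asymp\veps 2^k$. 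Fourth, I would replace each real centre by a nearby $\vec Y\in\F_q^n[T]$ of the correct scale, absorbing the rounding error into the radius at the cost of one further $\veps^{-1}$ and checking that every $\vec X\in S_k$ still lies in some $B(\vec Y,\veps|\vec Y|)$ (using $|\vec Y|\asymp 2^k\asymp|\vec X|$). Finally, I would collect the centres over all $(k,\vec\delta)$ into the set $T$ and multiply the three counts.

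The main obstacle I anticipate is purely bookkeeping rather than conceptual: getting the power of $\veps$ exactly $n+1$ and not $n+2$ (or worse). The subtlety is that the ``radius'' in the statement is $\veps|\vec Y|$ and $|\vec Y|$ is itself only known up to a constant factor within a scale, so one must be careful that the rounding-to-$\F_q[T]$ step and the within-scale size fluctuation together cost only a single extra $\veps^{-1}$, not two. I would handle this by choosing the dyadic windows fine enough (ratio $1+\veps$ rather than $2$) only if needed; in fact using ratio $2$ and absorbing the factor $2$ into $C$ should already work, since the rounding error is $\ll 1$ in each coordinate while $\veps|\vec Y|\gg\veps 2^k\gg 1$ in the regime where the balls are nontrivial, and the degenerate regime (small scales, where $\veps|\vec Y|\lesssim 1$) contributes only $O(1+\log(c_2/c_1))$ singleton balls directly. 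The second, minor, obstacle is making the constant $C$ genuinely explicit: this just requires tracking the stars-and-bars constant $C_0$ and the grid-covering constant for cubes, both of which are elementary.
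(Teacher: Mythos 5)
Your high-level plan (multi-scale stratification by $|\vec X|$, then a per-scale covering by balls of radius $\asymp\veps\cdot\text{scale}$, then count) is the same skeleton as the paper's, but the ledger that produces the exponent $n+1$ on $\veps^{-1}$ is different, and on inspection your ledger does not close. The paper takes scale windows of ratio $1+\veps$, not $2$: it defines a single model annulus $T_m=\{\vec Y:\; \tfrac n\veps(1-\veps)\le|\vec Y|\le\tfrac n\veps(1+\veps)\}$, dilates it by $\tfrac{c_1\veps(1+\veps)^m}{n}$ for $0\le m\le M$, and rounds $\vec X$ into this grid via $\vec Z(\vec X)=\lfloor n\vec X/(c_1\veps(1+\veps)^{m(\vec X)})\rfloor$. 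With that choice the per-scale count is $\#T_m\ll (2e/\veps)^n$ (Stirling on $(n(1+1/\veps)+n)^n/n!$), and the \emph{extra} $\veps^{-1}$ is supplied by the number of scales, $\log_{1+\veps}(c_2/c_1)\asymp \veps^{-1}\log(c_2/c_1)$. In your version the number of scales is only $O(1+\log(c_2/c_1))$, and you assign the missing $\veps^{-1}$ to ``rounding centres to $\F_q^n[T]$.'' That step is the gap: rounding a centre to a nearby lattice point does not multiply the number of centres --- it inflates the radius by an additive $O(n)$, which you yourself note is absorbed into the constant $C$ once $\veps 2^k\gg n$. So, as written, your argument would actually produce the stronger bound $C^n\veps^{-n}(1+\log(c_2/c_1))$, and the paragraph where you argue why the exponent should nevertheless be $n+1$ is internally inconsistent.

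A second, concrete problem is the ``degenerate regime.'' You claim that scales with $\veps|\vec Y|\lesssim 1$ contribute only $O(1+\log(c_2/c_1))$ singleton balls. That is false: when $\veps 2^k\lesssim 1$ a ball $B(\vec Y,\veps|\vec Y|)$ is literally $\{\vec Y\}$ (the minimum nonzero distance in this norm is $1$), so covering the slice costs $\#\{\vec X:\;|\vec X|\lesssim 1/\veps\}$ centres, and there are at least $\veps^{-n}$ such $\vec X$ (each coordinate ranges over $\gtrsim 1/\veps$ polynomials), possibly as many as $(q/\veps)^n$. This is not logarithmic, and it is the part of the argument where one must be most careful about whether the final constant is genuinely absolute. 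Finally, the degree-profile stratification with stars-and-bars is an extra layer the paper doesn't use; it is a reasonable $\F_q[T]$-adapted substitute for the paper's floor-function rounding into $T_m$, but it does not by itself repair either of the two issues above. To fix the write-up you should either switch to ratio-$(1+\veps)$ windows (matching the paper and making the count unambiguous), or, if you keep dyadic windows, re-derive the small-scale count honestly rather than asserting it is logarithmic, and drop the unjustified ``rounding costs $\veps^{-1}$'' claim.
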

\begin{proof}
It is enough to show the covering by balls $B(\vec{Y}, 2\veps |\vec{Y}|)$. We wish to slice $S$ into a union of regions where $|\cdot|$ is almost constant, namely 
$$T = \bigcup_{0 \leq m \leq M} \frac{c_1 \veps (1+\veps)^m}{n} T_m,$$
where
$$T_m = \{\vec{Y} \in \F_q^n[T]: \; \frac{n}{\veps}(1-\veps) \leq |\vec{Y}| \leq \frac{n}{\veps}(1+\veps)\}
\;\text{ and }\; 
M = \log_{1+\veps}\log \frac{c_2}{c_1}.$$
Let $\vec{X} \in S$. Consider
$$m(\vec{X}) = \left\lfloor \log _{1+\veps} \frac{|\vec{X}|}{c_1} \right\rfloor 
\text{ and }
\vec{Z}(\vec{X}) = \left \lfloor \frac{n \vec{X}}{c_1 \veps (1+\veps)^{m(\vec{X})}}\right \rfloor,$$
where $\lfloor \cdot \rfloor$ is the floor function.
Define $\vec{Y} = \frac{c_1 \veps (1+\veps)^m}{n} \vec{Z}(\vec{X})$. Then
\begin{equation*}
\begin{split}
&|\vec{Z}(\vec{X})| \leq \frac{n |\vec{X}|}{c_1 \veps (1+\veps)^{m(\vec{X})}} \text{ and thus } |\vec{Y}| \leq |\vec{X}| < c_2,\\
&|\vec{Z}(\vec{X})| \geq \frac{n |\vec{X}|}{c_1 \veps (1+\veps)^{m(\vec{X})}}-1 \text{ and thus } |\vec{Y}| \geq 
|\vec{X}| - \frac{c_1 \veps (1+\veps)^{m(\vec{X})}}{n} \\
&\geq c_1 - \frac{c_1 \veps (1+\veps)^{M)}}{n} \geq c_1 - \frac{c_2 \veps}{n} > c_1.
\end{split}
\end{equation*}
We have just shown that given an $\vec{X} \in S$ one can find a point $\vec{Y}$, that depends on $\vec{X}$ and lies in $T$.
In addition $\vec{Y}$ has the following property 
$$d(\vec{X}, \vec{Y}) = |\vec{X} - \vec{Y}| \leq 2 \veps |\vec{Y}|,$$
where $d(\cdot, \cdot)$ is the associated metric. It remains to estimate the size of $T$
\begin{equation*}
\begin{split}
\#T &  \leq \left( 1+ \log_{1+\veps}\frac{c_2}{c_1}\right) \# T_m \leq  \left( 1+ \log_{1+\veps}\frac{c_2}{c_1}\right) \frac{(n(1+\frac{1}{\veps})+n)^n}{n!}.
\end{split}
\end{equation*}
The result follows after application of Stirling formula.
\end{proof}

We will need the following lower bound for a canonical height on $E$.
\begin{lemma} \label{lowerbnbcanheight}
Let $E$ be an elliptic curve over $K$. There is an absolute constant $0<c<1$ such that, for every non-torsion point $P\in E(K)$ we have the bound
$$\h(P)  > c^{m} \max \big(1, h(j(E))\big),$$
where $m$ is the number of mulpiplicative places and $j(E)$ is as usual a $j$-invariant of $E$. 
\end{lemma}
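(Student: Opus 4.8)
The plan is to follow Silverman's argument for lower bounds on the canonical height, adapted to the function field $K=\F_q[T]$, decomposing $\h$ into a sum of local contributions $\h(P)=\sum_v d_v\l_v(P)$ (properly normalized) and estimating each $\l_v$ from below. The quantity $\max(1,h(j(E)))$ arises because $h(j(E))$ is supported precisely on the places of bad reduction: at a place $v$ of multiplicative reduction one has $v(j)=-v(\Delta)<0$, so $h(j(E))=\sum_{v:\,v(j)<0} d_v\,(-v(j))\,\log q$ up to normalization, and the sum runs over the $m$ multiplicative places (additive places contributing boundedly). So the first step is to write $h(j(E))$ explicitly as a sum over the multiplicative places and reduce the claim to producing, for a non-torsion $P$, enough local height at those places — or, when $h(j(E))\le 1$, to fall back on the trivial positivity $\h(P)>0$ together with a uniform lower bound on $\h(P)$ for non-torsion $P$ (which in the function field setting is itself bounded below by an absolute constant, e.g. via Lemma \ref{hprop}(3) or directly).

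Next I would invoke the theory of local heights at places of multiplicative reduction. Over $K_v$ with (potentially) multiplicative reduction, using the Tate parametrization $E(\bar K_v)\cong \bar K_v^\times/q_v^{\Z}$ (here $q_v$ is the Tate period, with $v(q_v)=-v(j(E))>0$), the local height has the classical Néron formula $\l_v(P)=\tfrac12 B_2(\{t\}) \cdot v(q_v) \cdot \log q_v$ in terms of the second Bernoulli polynomial evaluated at the fractional part $t\in[0,1)$ of the "position" of $P$ on the circle $\R/\Z$, plus a bounded correction. The point $P$ being non-torsion does not by itself prevent $t$ from being small at a given $v$, but $B_2(\{t\})$ is bounded below by $-1/12$, so $\l_v(P)\ge -\tfrac{1}{24}v(q_v)\log q_v + O(\cdot)$, which would ruin a naive sum. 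The fix, exactly as in Silverman, is to use that we may replace $P$ by a multiple $[k]P$ (the height scales by $k^2$) and average/pigeonhole: among the torsion-free directions, for at least one small $k$ the fractional parts $\{kt_v\}$ are simultaneously bounded away from $0$ at all $m$ multiplicative places, giving $\l_v([k]P)\gtrsim v(q_v)\log q_v$ at each, hence $\h([k]P)\gtrsim \sum_v d_v v(q_v)\log q_v \asymp h(j(E))$, and dividing back by $k^2$ with $k\ll C^m$ produces the factor $c^m$. Equivalently, one runs the counting/sphere-packing style pigeonhole over the $m$ circles, which is where the constant $c$ (with $0<c<1$) and its exponential dependence on $m$ enter.

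Concretely the steps are: (1) fix a minimal model, write $h(j(E))=\tfrac{1}{[K:\Q]}\sum_{v}\max(0,-v(j))\,d_v\log q_v$ and note the sum has exactly $m$ nonzero terms (plus possibly additive places, handled by absorbing them into the constant since $n_v\ge 2$ there forces only finitely much and their $\l_v\ge -c\,d_v$ bounded); (2) at each multiplicative place write the Tate-parametrization formula for $\l_v$ and record the lower bound $\l_v(P)\ge (\tfrac12 B_2(\{t_v\}))v(q_v)\log q_v - O(\log q_v)$; (3) at good places use $\l_v(P)\ge 0$ (this is in the excerpt's proof of Lemma \ref{proposition34}); (4) pigeonhole over $k=1,\dots,K_0$ with $K_0\asymp c_0^{-m}$ to find $k$ with $\{kt_v\}\in[\tfrac14,\tfrac34]$ for all $m$ multiplicative $v$, so $B_2(\{kt_v\})\ge$ a positive absolute constant, giving $\h([k]P)\ge c'\,h(j(E)) - O(m)$; (5) divide by $k^2\le c_0^{-2m}$ and absorb the $O(m)$ error and the $\max(1,\cdot)$ case to conclude $\h(P)>c^m\max(1,h(j(E)))$ for an absolute $0<c<1$. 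The main obstacle is step (4): making the pigeonhole over the several circles genuinely uniform — i.e. controlling how large $k$ must be purely in terms of $m$ and not the curve — and correctly handling the non-split multiplicative and additive places (where the Tate torus is twisted, or the formula acquires a bounded defect) without letting curve-dependent quantities leak into the constant. This is precisely the delicate part of Silverman's original argument, and the function field setting (no archimedean places, so no analytic estimate needed) simplifies the bookkeeping but not this combinatorial core.
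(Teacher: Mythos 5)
The paper does not actually prove this lemma: the ``proof'' in the text is a one-line citation to Silverman \cite{Silverman} and Hindry--Silverman \cite{Hindry1988}, noting that a stronger inequality $\h(P)\ge c\,\sigma_E\,h(E)$ is proved there. So there is nothing in the paper for your argument to be compared against line by line; what you have produced is an independent sketch of the underlying Silverman argument. The overall framework you describe --- decompose $\h$ into local heights, observe $\l_v\ge 0$ at good places, use the Tate parametrization and the N\'eron--Bernoulli formula $\l_v(P)=\tfrac12 B_2(\{t_v\})\ord_v(q_v)\log q_v+\cdots$ at (potentially) multiplicative places, and then pass to a well-chosen multiple $[k]P$ --- is indeed Silverman's strategy, so in that sense your outline is aimed at the right target.

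However, your step (4) contains a concrete error. You propose to pigeonhole to a $k$ with $\{kt_v\}\in[\tfrac14,\tfrac34]$ for all multiplicative $v$ and then claim $B_2(\{kt_v\})$ is ``a positive absolute constant.'' This has the sign backwards: $B_2(t)=t^2-t+\tfrac16$ is \emph{negative} on the entire interval $[\tfrac14,\tfrac34]$ (it vanishes at $t=\tfrac12\pm\tfrac{1}{2\sqrt3}\approx 0.211,\,0.789$, and $B_2(\tfrac14)=-\tfrac1{48}$, $B_2(\tfrac12)=-\tfrac1{12}$). To make the multiplicative local heights contribute positively you need $\{kt_v\}$ to lie \emph{near} $0$ or $1$ (say in $[0,\tfrac16]\cup[\tfrac56,1)$), not in the middle. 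Moreover, even with the target corrected, the pigeonhole as you state it does not obviously yield $k\ll C^m$ with $C$ absolute: the parameters $t_v$ are rationals whose denominators $|{\ord_v j}|$ can be arbitrarily large and curve-dependent, so forcing all $\{kt_v\}$ close to an integer is exactly the hard combinatorial core, and Silverman's actual argument handles it by a more careful averaging/counting device and in fact produces a constant of the shape $(C\sigma)^{-C\sigma}$ rather than $c^\sigma$. You do flag (4) as the delicate step, but as written it would fail; if you want to reconstruct a genuine proof here rather than cite the literature as the paper does, this step needs to be replaced, not merely tightened.
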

\begin{proof}
This Lemma is an analogous result to the ones in \cite{Silverman} and \cite{Hindry1988}. In fact, a stronger result was proven in \cite{Hindry1988}, namely:  $\hat{h}(P) \geq c \sigma_E h(E)$, where $\sigma_E$ is the Szpiro ratio (it gives $\hat{h}(P) \geq c_1 h(E)$ when $j(E) \in \F_q(T) /\F_q(T^p)$).
\end{proof}

\section*{Bounding the number of $S$-integral points}
In this section we prove the bound for the number of $S$-integer points on $E/K$ of height less than $h_0$. Here $t$ is a parameter to be optimized further. Then we are going to present a proof of the main result. It follows the way proposed in \cite{Helfgott2004}, \cite{Silverman}, \cite{GrossSilv} and later improved in \cite{Helfgott2006}. By embedding  $E(K)/E(K)_{tors}$ into $E(K) \otimes_{\Z} \R \cong \R^{\rank E}$ we can take the canonical height on $E$ to be squared Euclidean norm. The key idea consists of the fact that the points we are looking at  have large distance between each other. Namely, by choosing a good division of the area into small symmetric slices we can say that any two points are separated by almost 60 degrees. Then the number of integral points on $E$ is bounded above by $2^{\rank E}$ (this constant was later improved to $(1+\veps)$ in \cite{Helfgott2006}). It remains to apply Theorem \ref{bsdff} and (\ref{anbound}) for getting the result. 

\begin{theorem} \label{betaparam}
Let $E$ be an elliptic curve over $K$. Let also $S$ be a finite set of places of $K$, including all irreducible divisors of the discriminant of $E$. Then, for any $h_0 \geq 1$ and every $0\leq t \leq 1$, the number of $S$-integer points $P$ of $E(K)$ with a canonical height $\h(P) \leq h_0$ is at most
$$O \left( C^{|S|} \veps^{-2(|S|+[K:L])}|S|^{[K:L]} (1+\log h_0)^2 e^{t[K:L]h_0+(\beta(t)+\veps)\rank E}\right),$$ 
where $C$ is an absolute constant and $\beta(t)$ is defined for $0\leq t <1$ by
\begin{equation*}
\begin{split}
&\beta(t) = \frac{1+f(t)}{2f(t)}\log  \frac{1+f(t)}{2f(t)} -  \frac{1-f(t)}{2f(t)} \log  \frac{1-f(t)}{2f(t)},\\
&f(t)=\frac{\sqrt{(1+t)(3-t)}}{2}, \;\; \beta(1)=0.\\
\end{split}
\end{equation*}
\end{theorem}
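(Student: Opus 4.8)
The plan is to follow the Helfgott--Venkatesh strategy: embed the $S$-integral points of bounded height into a Euclidean space via the canonical height, chop the resulting point set into a controlled number of "slices" on which any two points subtend an angle close to $\pi/3$ (or larger), and then feed each slice into the Kabatiansky--Levenshtein bound of Lemma \ref{kab-lev}. The parameter $t$ will govern the trade-off: points that are "small" (canonical height below some threshold proportional to $h_0$) are counted by a volume/packing argument producing the factor $e^{t[K:L]h_0}$, while points that are "large" are the ones to which the angle estimate applies, producing the factor $e^{(\beta(t)+\veps)\rank E}$.

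First I would set up the lattice. Writing $n=\rank E$, we identify $E(K)/E(K)_{\mathrm{tors}}$ with a lattice in $\R^n$ on which $\hat h$ is the squared Euclidean norm. Fix an $S$-integral point $P_1$ of height $\le h_0$ and, for a second such point $P_2$, apply Lemma \ref{proposition34}: after subdividing $E(K_v)$ into the $W_{v,i}$ of Lemma \ref{slicinglambda} for each $v$ of potentially multiplicative reduction, and after imposing that the two points have the same reduction modulo a suitable ideal $I$ built from $S$-units (so that $\log NI$ is comparable to $\sum_{v\in S}d_v\max_j\lambda_v(P_j)$, hence to $h_0$), we get
\begin{equation*}
\hat h(P_1-P_2)\ \ge\ (1-2\veps)\max\big(\hat h(P_1),\hat h(P_2)\big)+\frac{\log NI}{[K:L]}.
\end{equation*}
The number of slices needed for all these congruence/$W$-class conditions is bounded by $C^{|S|}\veps^{-2(|S|+[K:L])}|S|^{[K:L]}$ times a logarithmic factor, which is where the prefactor in the statement comes from; this combinatorial bookkeeping is the fiddly part but it is routine once one tracks the contributions of Lemma \ref{slicinglambda} and of the choice of $I$. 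Within one such slice, the displayed inequality is a statement about three vectors $u=P_1$, $v=P_2$, $u-v$ in $\R^n$: if we also know $\hat h(P_i)$ lies in a dyadic-type window $[\rho,(1+\veps)\rho]$ (obtained by a further $O(\log h_0)$-fold slicing on the height), then $|u-v|^2\ge (1-2\veps)\max(|u|^2,|v|^2)$ forces $\langle u,v\rangle\le \tfrac12(1+O(\veps))|u||v|$, i.e. the angle at the origin is $\ge \pi/3-O(\veps)$. Here is where the cut at level $t$ enters: the extra additive term $\log NI/[K:L]\sim \kappa h_0$ means that once $\hat h(P_i)\ge th_0$ the angle is in fact bounded below by $\arccos f(t)$ with $f(t)=\tfrac12\sqrt{(1+t)(3-t)}$ — one checks $\cos(\text{angle})\le (2t - (1-2\veps)(1+t))/(2t)$ type bounds — so Lemma \ref{kab-lev} with $\theta=\arccos f(t)$ gives at most $e^{(\beta(t)+\veps)n}$ points per slice in the "large" range.

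It remains to handle the "small" points, those with $\hat h(P)\le th_0$: these lie in a Euclidean ball of radius $\sqrt{th_0}$, and since by Lemma \ref{lowerbnbcanheight} (or simply by working on the Mordell--Weil lattice after the same slicing) distinct such points are separated by a definite distance, a volume-packing bound à la the Corollary above bounds their number by $e^{t[K:L]h_0}$ up to the same combinatorial prefactor — this is the term that blows up with $h_0$ and is why $t$ must eventually be taken small. Multiplying the per-slice bound $e^{(\beta(t)+\veps)n}$ by the number of slices and adding the small-point contribution $e^{t[K:L]h_0}$ yields the claimed estimate; the boundary case $t=1$ corresponds to $f(1)=1$, $\theta=0$, $\beta(1)=0$, where the angle argument is vacuous and everything is absorbed into the volume term. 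The main obstacle, to my mind, is not any single inequality but making the various slicings compatible — the $W_{v,i}$-decomposition, the congruence-mod-$I$ decomposition, and the height-window decomposition must be performed simultaneously so that Lemma \ref{proposition34} applies on each common refinement, and one must check that the total slice count is exactly the $C^{|S|}\veps^{-2(|S|+[K:L])}|S|^{[K:L]}(1+\log h_0)^2$ appearing in the statement rather than something larger.
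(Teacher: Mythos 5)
Your proposal has the right global shape (slice by congruence classes, by $W$-classes, by height windows; separate points in the Mordell–Weil lattice via Lemma \ref{proposition34}; apply Kabatiansky–Levenshtein), but it misplaces the source of the factor $e^{t[K:L]h_0}$, and the small-vs-large split you introduce does not match either the paper or the stated bound.

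In the paper's argument there is no dichotomy between ``small'' and ``large'' points. Instead, the ideal $I$ is chosen with $N(I)\asymp e^{th_0[K:L]}$ (generated by an irreducible $f$ with $|f|\asymp e^{th_0}$ avoiding $S$), and all $S$-integral points in the top height window $\{(1-\veps)h_0\le\hat h\le h_0\}$ are distributed into the $O(N(I))$ residue classes modulo $I$. Within a single residue class (and a fixed $W$-class, fixed ball from Lemma \ref{slicepack35}), Lemma \ref{proposition34} gives $\hat h(P_1-P_2)\ge(1+t-O(\veps))\max_j\hat h(P_j)$, forcing $\cos\theta\le\frac{1-t}{2}+O(\veps)$ and hence at most $e^{(\beta(t)+O(\veps))\rank E}$ points per class. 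The bound in the theorem is the \emph{product} (number of classes)$\times$(per-class count), i.e.\ $N(I)\cdot e^{(\beta(t)+\veps)\rank E}\ll e^{t[K:L]h_0+(\beta(t)+\veps)\rank E}$, times the $h_0$-independent combinatorial prefactor. Your write-up tucks ``the choice of $I$'' into that prefactor $C^{|S|}\veps^{-2(|S|+[K:L])}|S|^{[K:L]}(1+\log h_0)^2$, but the number of classes modulo $I$ grows like $e^{th_0[K:L]}$, which is not a bounded combinatorial overhead --- that is exactly the main $h_0$-dependent term and cannot be hidden there.

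Your proposed replacement --- counting points with $\hat h\le th_0$ by a volume-packing argument and claiming this gives $e^{t[K:L]h_0}$ --- does not work. A volume bound in the Mordell--Weil lattice for points in a ball of radius $\sqrt{th_0}$ gives something of the shape $(1+2\sqrt{th_0/c_1})^{\rank E}$, where $c_1$ is a lower bound for $\hat h$ on non-torsion points. This is an exponential in $\rank E$, not an exponential in $h_0$; the two quantities are unrelated in general. Moreover $c_1$ is curve-dependent (Lemma \ref{lowerbnbcanheight} only gives $c_1\gg \kappa^m \max(1,h(j))$), which is precisely the dependence the whole local-heights/$W$-slicing machinery is designed to remove, so invoking it here would reintroduce the problem Theorem \ref{betaparam} is trying to avoid. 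Finally, a small arithmetic slip: since $f(t)=\tfrac12\sqrt{(1+t)(3-t)}$ satisfies $f(t)=\sin\bigl(\arccos\tfrac{1-t}{2}\bigr)$, the packing angle should be $\theta$ with $\sin\theta=f(t)$ (equivalently $\cos\theta=\tfrac{1-t}{2}$), not $\theta=\arccos f(t)$ as you wrote --- the Kabatiansky--Levenshtein bound in Lemma \ref{kab-lev} is expressed in terms of $\sin\theta$, which is why $f(t)$ appears in $\beta(t)$ directly.
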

\begin{proof}
Briefly speaking, we subdivide $S$-integer points on $E$ denoted by $E(K,S)$ into points $(\mmod I)$ for $I$ being a suitable ideal in $O_K$. Then Lemma \ref{proposition34} states that after some manipulations on this partition the points, that lie in the same class tend to be far away from each other in the Mordell-Weil lattice. Here we apply sphere packing bounds of Kabatiansky and Levenstein, namely Lemma \ref{kab-lev} to each part separately. These sphere packing bounds will bring us to the term  $e^{\beta(t)\rank E}$ on each part. Summation over all the classes gives rise to another term $e^{[K:L]h_0}$. We have only to take care of getting the right conditions to apply Lemma \ref{proposition34}.

We firstly subdivide $E(K,S)$ into a very few slices to force any two points of the same slice have comparable canonical height. Consider a set
$$\{P \in E(K,S): \h(P) \leq h_0\}.$$
We want to cover it by sets of the form
$$\{P \in E(K,S): (1-\veps)h_i \leq \h(P) \leq h_i\}.$$
By Lemma \ref{lowerbnbcanheight} it is enough to take $\ll \veps^{-1}(\log h_0 + |S|)$ such sets.
Then we are allowed to decrease the power of $(1+\log h_0)^2$ just to $1$, only for the set of points 
$$\{(1-\epsilon) h_0 \leq \hat{h}(P) \leq h_0\}.$$ 
Suppose first that $t \neq 0$. 
Let $S'$ be the set of places below $S$.
If 
$$X = \max (\lceil e^{th_0} \rceil, |\bar{S}|^{1+\frac{1}{[K:L]}}),$$
%for an appropriately chosen constant $C_{[K:L]}$, 
then there is an irreducible polynomial $f$ in $L$, such that $f \notin \bar{S}$ and $X \leq |f| \leq 2X$.
The ideal $I$ of $\Oo_{K}$ generated by $f$ satisfies
\begin{equation*}
\frac{\log N(I)}{[K:L]} \geq h_0 t, \;\;\; N(I) \ll_{[K:L]} s^{[K:L]+1}e^{t h_0 [K:L]}.
\end{equation*}
The $S$-integer points of our curve $E(K)$ fall into no more than $O_{[K:L]}(N(I))$ classes under the reduction modulo the corresponding ideal $I$. 
Define $R$ to be the set of all places of potentially multiplicative reduction. For any place $v \in R$ we subdivide the corresponding $E(K_v)$ into $n_v+1$ subsets, where $n_v$ is defined as in Lemma \ref{slicinglambda}(we take $\frac{\veps}{2}$ instead of $\veps$).
Consider arbitrary tuples of the form $(a_v)_{v \in R}$, $(b_v)_{v \in R}$, such that $0 \leq a_v \leq n_v$ and $b_v = 0,1$. We define $B$ as the set of non-torsion points $P \in E(K,S)$, such that for each $v \in R$ we have that $P$ falls into the corresponding $W$-class -- $P \in W_{v,a_v}$ and that $\l_v(p) \geq 0$ is equivalent to $b_v=1$. Now we bound the number of elements in 
$$B_{h_0} = \{P \in B: (1-\veps)h_0 \leq \hat{h}(P) \leq h_0\}.$$ 
The number of such sets $B$ is bounded above by $c_0^2 |\log \veps|^{s+[K:L]_{\veps}-2[K:L]}$, that brings us to the desired result. 
Define $M=(S-R) \cup \{v \in R: b_v =1\}$ and a map $l(P)=(d_v \l_v (P))_{v \in M}$. For $v \in S-M$ we know that $\l_v(P) < 0$, so that one can apply Lemma \ref{lowerbnbcanheight} and get 
$$|l(P)|_1 > [K:L] \kappa^s \max(1, h(j)).$$ 
Using \cite[Proposition 3]{GrossSilv} we get the bound 
$$\sum_{v \notin M} d_v \l_v(P) \geq -\frac{1}{24} h_k(j)-3[K:L].$$
On combining that we obtain
$$|l(P)|_1 \leq [K:L] (h_0+3+h(j)/24)$$ 
for $P \in B_{h_0}$. 
By Lemma \ref{slicinglambda} we can cover $l( B_{h_0})$ by at most
$$O(c_1^s \veps^{-(s+1)} \log (h_0+1))$$
balls  $B(x,\frac{\veps}{8}|x|_1)$ in the $1$-norm. Take two points $P_1, P_2 \in B_{h_0}$ with $l(P_i) \in B(x,\frac{\veps}{8}|x|_1)$ for $i=1,2$. We then have
$$|l(P_1)-l(P_2)|_1 \leq \frac{\veps}{4}|x|_1 \leq \frac{\veps}{2} \max_{j=1,2}|l(P_j)|_1.$$
If these points have the same reduction modulo $I$, then we apply Lemma \ref{proposition34} and get that
$$\hat{h}(P_1-P_2) \geq (1-\veps) \max_{j=1,2} \hat{h}(P_j)+ \frac{\log N(I)}{[K:L]} \geq (1+t-\veps)\max_{j=1,2} \hat{h}(P_j).$$
Now we embed the Mordell-Weil lattice modulo torsion into $\R^{\rank E}$ by taking $\hat{h}$ to be the square of the Euclidean height. Since all $\hat{h}(P_1)$, $\hat{h}(P_2)$, $\hat{h}(P_1-P_2)$ are positive, then the images of $P_1,P_2$, say, $Q_1,Q_2 \in \R^{\rank E}$ aree different from each other and from the origin, so that the angle between them is at least $\arccos \frac{1-t+O(\veps)}{2}$. 
We now apply Lemma \ref{kab-lev} and get that there are at most $e^{r(\beta(t)+O(\veps))} O_{[K:L]}(1)$ points of $B_{h_0}$ with an image in a given ball and with a prescribed reduction modulo $I$. Now we combine these results with the number of variants for $I$, the number of possible sets $B$ and the number of balls to get the theorem.
Notice, that in the case $t=0$ one simply proceeds without $I$. 
\end{proof}
The case $t=0$ is the pure application of sphere-packing results of Lemma \ref{kab-lev}, while the case $t=1$ is related to the corresponding result of Bombieri-Pila type.
\begin{corollary}
Let $E$ be an elliptic curve over $K$ defined by a Weierstrass equation with integer coefficients. Let $S$ be a finite set of places of $K$, including all places dividing the discriminant of $E$. Then for every sufficiently small $\veps$ the number of $S$ integral points on $E/K$ is at most
$$O_{\veps} \left( C^{s} \veps^{-2(s+1)} \left(\log |\Delta| + \log p\right)^2 e^{\rank E (\beta(0)+\veps)}\right).$$
\end{corollary}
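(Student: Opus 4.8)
The plan is to deduce this corollary from Theorem \ref{betaparam} by simply specializing the parameter $t$ to $0$ and then translating the abstract hypotheses of the theorem into the concrete setting of $S$-integral points on $E/K$. When $t=0$ the exponential factor $e^{t[K:L]h_0+(\beta(t)+\veps)\rank E}$ in Theorem \ref{betaparam} collapses to $e^{(\beta(0)+\veps)\rank E}$, the $|S|^{[K:L]}$ factor becomes an absolute constant (since $[K:L]=1$ over $\F_q[T]$), and the $\veps^{-2(|S|+[K:L])}$ becomes $\veps^{-2(s+1)}$. So the main content is to show that \emph{every} $S$-integral point has canonical height bounded by a suitable $h_0$, and that this $h_0$ can be taken to be $O(\log|\Delta| + \log p)$.

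First I would recall that an $S$-integral point $P=(x,y)$ on $E$ has $x$-coordinate with denominator supported only on $S$, so its naive height $h_x(P)$ is controlled by the degrees of the defining equation and the discriminant; concretely $h(x(P)) \ll \deg\Delta + \text{(bounded data)}$. Then I would invoke the standard comparison $|\h^E(P) - \tfrac12 h_x(P)|$ is bounded on $E(K)$, which was already used in the discussion preceding the Corollary to Lemma \ref{hprop}, together with Lemma \ref{lowerbnbcanheight} to pin down the relevant scale; this gives a bound $\h^E(P) \le h_0$ with $h_0 \ll \log|\Delta| + \log p$, where the $\log p$ term accounts for the wild-ramification/characteristic contribution (the $h(j(E))$ term in Lemma \ref{lowerbnbcanheight}, which in the inseparable case $j(E)\in\F_q(T)/\F_q(T^p)$ carries a $\log p$). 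I would then feed this $h_0$ and $t=0$ into Theorem \ref{betaparam}, noting that $1+\log h_0 \ll \log(\log|\Delta|+\log p) \ll (\log|\Delta|+\log p)$ so that the $(1+\log h_0)^2$ factor is absorbed into the stated $(\log|\Delta|+\log p)^2$.

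The main obstacle I expect is the bookkeeping around the constant $c_f$ and the dependence on the characteristic. The bound on the naive height of an $S$-integral point is clean only when one is careful about the minimal model and the reduction types at places of $S$; making the implied constant genuinely absolute (and not secretly depending on $E$) requires that the only curve-dependent quantities that enter are $|\Delta|$, $p$, and $|S| = s$, which is exactly the point of having passed through Lemma \ref{proposition34} and Lemma \ref{lowerbnbcanheight} rather than the naive Corollary to Lemma \ref{hprop}. So the real work is checking that $h_0$ genuinely has the form $O(\log|\Delta|+\log p)$ with an absolute implied constant, and that no hidden $E$-dependence sneaks in through the comparison between $\h^E$ and $h_x$; once that is established, the conclusion is immediate from Theorem \ref{betaparam} with $t=0$, and the $t=0$ remark following the theorem already flags this as the pure sphere-packing case.
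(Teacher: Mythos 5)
Your high-level plan is right: set $t=0$ in Theorem \ref{betaparam}, translate $[K:L]=1$, and feed in an explicit $h_0$ bounding the canonical height of all $S$-integral points. But the step where you justify the height bound on $S$-integral points is wrong, and this is the heart of the matter.

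You write that an $S$-integral point $P=(x,y)$ has ``$x$-coordinate with denominator supported only on $S$, so its naive height $h_x(P)$ is controlled by the degrees of the defining equation and the discriminant; concretely $h(x(P)) \ll \deg\Delta + \text{(bounded data)}$.'' This does not follow. $S$-integrality constrains the \emph{denominator} of $x(P)$, but places no a priori cap on the \emph{numerator}, so $h_x(P)$ can in principle be arbitrarily large. Bounding the height of $S$-integral points is exactly the content of an effective Siegel-type theorem (Baker's method over number fields; over $K=\F_q[T]$ the paper invokes the Pacheco-type bound stated in the Lemma immediately after this Corollary, giving $\h_K(y_P^4/\Delta)\le 48\,p^e(2g-2+\#S)$). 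Without that input your ``bounded data'' has no justification, and neither Lemma \ref{lowerbnbcanheight} (a \emph{lower} bound for $\h$) nor the $\h$-versus-$h_x$ comparison from Lemma \ref{hprop} substitutes for it.

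Secondarily, the bookkeeping on the logarithmic factor is off. If one really had $h_0 \ll \log|\Delta|+\log p$, then $(1+\log h_0)^2 \ll \bigl(\log(\log|\Delta|+\log p)\bigr)^2$, which is much smaller than the stated $(\log|\Delta|+\log p)^2$; the Corollary's factor matches $h_0$ being \emph{polynomial} in $|\Delta|$ and $p$ (so $\log h_0 \asymp \log|\Delta|+\log p$), which is exactly the shape the effective height bound actually produces. This is not a fatal error (a smaller factor is still an upper bound), but it signals that you have misidentified the scale of $h_0$. The fix is to replace the hand-wave about $S$-integrality by the effective height bound and then plug that $h_0$ into Theorem \ref{betaparam} with $t=0$.
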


We need as well upper bound for the canonical height. Here we adapt the result of Pacheco \cite{pacheco98}. There are known bounds over $\Q$, see, for example \cite{Hajdu1998}. Also one finds good bounds in \cite{Hindry1988}, but they work only in characteristic $0$.
\begin{lemma}
Let $E$ be an elliptic curve over $K$ defined by a Weierstrass equation $y^2=f(x)$. Let $\Oo_S$ be the ring of $S$-inetegers and $\Oo_S^*$ be the ring of $S$-units. Suppose that $f(X) \in \Oo_S$ and the discriminant $\Delta \in \Oo_S^*$, $p>2$. Define a set $\Xi$ in the following way.
Let $f(X) = (X-x_1)(X-x_2)(X-x_3)$ be the factorization of $f(X)$ in $\bar{K}[X]$. Let $P=(x_P,y_P) \in \Oo_S$. Define $\xi_i^2=X-x_i$, $i=1,2,3$. 
Let $L=K(x_1,x_2,x_3,\xi_1,\xi_2,\xi_3)$.
For any permutation $\{i,l,m\}$ of $\{1,2,3\}$ define
$$\Xi = \left\{\frac{(\xi_i-\xi_l)}{(\xi_i-\xi_m)}, \frac{(\xi_i-\xi_l)}{(\xi_i+\xi_m)}, 
\frac{(\xi_i+\xi_l)}{(\xi_i-\xi_m)}, \frac{(\xi_i+\xi_l)}{(\xi_i+\xi_m)}\right\}.$$
Then for any $\eta \in \Xi$ we have
$$\h_L(\eta) \leq 2p^{e}(2g_L-2+|S_L|),$$
where $S_L$ is the set of places of $L$ lying over $S$ and $g_L$ is the genus of $L$.
Moreover, if $p>3$, then for any $P=(x_P,y_P) \in \Oo_S$ we have
$\h_K(y_P^4/\Delta) \leq 48 p^{e} (2g-2+|S|)$.
\end{lemma}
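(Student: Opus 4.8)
The plan is to bound the height of the auxiliary elements $\eta \in \Xi$ by relating them to functions on the curve $E$ whose divisors we can control, and then transfer this information to $\h_K(y_P^4/\Delta)$ via an explicit algebraic identity. First I would observe that each $\eta \in \Xi$ is a unit away from the ramification locus: since $\Delta \in \Oo_S^*$ and $\xi_i^2 = X - x_i$, the only places where $\xi_i - \xi_l$ or $\xi_i + \xi_l$ can have zeros or poles are those lying over $S$ (together with the places above the infinite place coming from $X = \infty$, which we track separately). Concretely, $(\xi_i - \xi_l)(\xi_i + \xi_l) = \xi_i^2 - \xi_l^2 = x_l - x_i$, which is a constant in $L$; hence a zero of $\xi_i - \xi_l$ forces $\xi_i + \xi_l$ to have a pole of the same order, and vice versa, so the divisor of $\eta$ is supported entirely on $S_L$. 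This means $\eta$ is an $S_L$-unit, and its height is governed by the function field analogue of the $S$-unit height bound: $\h_L(\eta) \ll \deg(\text{polar divisor})$, which in turn is controlled by the genus $g_L$ of $L$ and the size of $S_L$ through a Riemann–Hurwitz / Riemann–Roch count, giving the stated $2p^{e}(2g_L - 2 + |S_L|)$ after accounting for the (possibly wild, hence the $p^{e}$ factor) ramification in the extension $L/K$ generated by the $\xi_i$.

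Next I would pass from $\eta$ to the point $P$. The point is that $y_P^2 = f(x_P) = (x_P - x_1)(x_P - x_2)(x_P - x_3) = \xi_1(P)^2\xi_2(P)^2\xi_3(P)^2$, so $y_P = \pm \xi_1(P)\xi_2(P)\xi_3(P)$, and differences like $\xi_i(P) - \xi_l(P)$ are exactly the quantities that appear (after specialization) in the elements of $\Xi$. The classical Siegel-type identity — the same one underlying Pacheco's argument — expresses a suitable power of $y_P$ (up to $S$-units, here the relevant power turning out to be the fourth because of the four elements listed in $\Xi$ and the need to clear the $\xi_i \mapsto -\xi_i$ ambiguity) as a product/ratio of values of the $\eta$'s. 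Evaluating the height bound for $\eta$ at the point $P$ and combining the three or four contributions, one gets $\h_L(y_P^4/\Delta)$ bounded by a constant multiple of $p^{e}(2g_L - 2 + |S_L|)$; descending from $L$ back to $K$ costs the degree $[L:K]$, which is absorbed into the passage $2g_L - 2 + |S_L| \to [L:K](2g - 2 + |S|)$ via Riemann–Hurwitz, so that after renormalizing one obtains the clean bound $\h_K(y_P^4/\Delta) \leq 48 p^{e}(2g - 2 + |S|)$ when $p > 3$ (the restriction $p>3$ being exactly what is needed so that the $\xi_i$-extension and the duplication-type manipulations are tamely controlled enough for the numerical constant to work out).

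The main obstacle I expect is getting the constants and the $p^e$-factor right, i.e. making the Riemann–Hurwitz bookkeeping for the extension $L = K(x_1,x_2,x_3,\xi_1,\xi_2,\xi_3)$ precise in the presence of wild ramification. Over $\Q$ (as in \cite{Hajdu1998}, \cite{Hindry1988}) one never sees the $p^e$, but here every ramified place above $S$ can contribute wild ramification bounded in terms of $p$ and the ramification exponent, and one must check that the different exponent is $\le e \cdot (\text{something tame})$ uniformly; this is the step where Pacheco's characteristic-$p$ refinement of the classical argument really enters, and it is the reason the final constant is $48 p^e$ rather than an absolute $48$. A secondary technical point is verifying that $L/K$ is separable (so that genus and differents make sense) — this needs $p \neq 2$ for the $\xi_i$ and is why $p > 2$ is assumed already for the first part of the statement, with the sharper $p > 3$ needed for the $y_P^4/\Delta$ conclusion. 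Everything else — the divisor computation for $\eta$, the $S$-unit height estimate, and the Siegel identity linking $\Xi$ to $y_P$ — is routine once these ramification facts are in place.
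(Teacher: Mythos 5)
The paper gives no proof of this lemma at all; it simply cites Pacheco's work and states the bound, so there is no ``paper's own proof'' to compare against. Evaluated as a reconstruction of Pacheco's argument, your plan has the right shape (show the elements of $\Xi$ are $S_L$-units, bound their heights, specialize at $P$, descend from $L$ to $K$), but there is one genuine gap and a couple of smaller misstatements.

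The gap is in the first step, where you bound $\h_L(\eta)$. You argue that since $\eta$ is an $S_L$-unit, its polar divisor is supported on $S_L$ and hence its degree ``is controlled by the genus $g_L$ and $|S_L|$ through a Riemann--Hurwitz / Riemann--Roch count.'' That step is false as stated: being an $S_L$-unit puts no bound whatsoever on height (the $S_L$-unit group modulo constants has rank $|S_L|-1$ and contains units of arbitrarily large height). The mechanism that actually bounds $\h_L(\eta)$ is the \emph{unit equation}: the specific ratios in $\Xi$ satisfy a Siegel identity of the form $\eta + \eta' = 1$ with \emph{both} $\eta$ and $\eta'$ being $S_L$-units (for instance $\frac{\xi_i-\xi_l}{\xi_i-\xi_m}+\frac{\xi_l-\xi_m}{\xi_i-\xi_m}=1$), and one then invokes the Mason/Stothers-type ($abc$) bound for the $S$-unit equation over function fields --- in positive characteristic the Voloch/Pacheco refinement, which is where the $p^{e}$ factor originates. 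You do mention the ``Siegel-type identity,'' but you deploy it only in the second paragraph to relate $\eta$ to $y_P$; without it in the first paragraph, the stated inequality $\h_L(\eta)\le 2p^e(2g_L-2+|S_L|)$ has no justification.

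Two smaller corrections. First, $x_l-x_i$ is not a constant in $L$: it lies in the finite extension $K(x_1,x_2,x_3)$ of $K=\F_q(T)$ and is in general a non-constant function; what is actually used is that it divides $\Delta\in\Oo_S^{*}$ and is therefore an $S_L$-unit. Second, your claim that ``a zero of $\xi_i-\xi_l$ forces $\xi_i+\xi_l$ to have a pole of the same order'' cannot occur, because $\xi_i\pm\xi_l$ are $S_L$-integers (as $f$ is monic over $\Oo_S$ and $x_P\in\Oo_S$, so $\xi_i^2=x_P-x_i$ is an $S_L$-integer and $\xi_i$ is integral). The correct deduction is that since the product $(\xi_i-\xi_l)(\xi_i+\xi_l)=x_l-x_i$ is an $S_L$-unit and both factors are $S_L$-integers, both factors are forced to have valuation zero outside $S_L$, i.e.\ both are $S_L$-units. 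The conclusion you want (each $\eta\in\Xi$ is an $S_L$-unit) is right, but for this reason, not the one you gave.

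Once the unit-equation step is inserted, the rest of your outline --- the identity expressing $y_P^4/\Delta$ in terms of the $\eta$'s, the descent from $L$ to $K$ via Riemann--Hurwitz, and the role of $p>2$ (separability of the $\xi_i$-extension) and $p>3$ (to get the clean constant $48$) --- is consistent with Pacheco's treatment, so the overall plan is sound modulo that missing ingredient.
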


We are now ready to give a version of Theorem \ref{betaparam} with an optimized parameter $t$.

\begin{corollary} \label{betaparamcor}
Let $E$ be an elliptic curve over a field $K$. Let $S$ be a finite set of places of $K$, that contains all places dividing the discriminant of $E$. Let $\alpha(x) = \min(xt+\beta(t), 0 \leq t \leq 1)$, where $\beta$ is as in Theorem \ref{betaparam}. Let also $R = \max (1, \rank E(K))$. Then for every $h_0 \geq 1$ and for every sufficiently small $\veps$, the number of $S$-integral points on $E$ over $K$, that have canonical height less or equal to $h_0$ is at most
$$O_{\veps, [K:L]} \left( C ^{\# S} \veps^{-2(\#S+[K:L])} \#S^{[K:L]}(1+\log h_0)^2 e^{R \alpha \left([K:L]h_0/R\right)+\veps R}\right),$$
where $C$ is an absolute constant.
\end{corollary}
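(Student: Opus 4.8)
The plan is to obtain Corollary \ref{betaparamcor} from Theorem \ref{betaparam} by simply optimizing over the free parameter $t$. In Theorem \ref{betaparam} the exponential factor is $e^{t[K:L]h_0+(\beta(t)+\veps)\rank E}$. We want to choose $t$ to make this as small as possible. Writing $r=\rank E$ and pulling $r$ out of the exponent, the exponent equals $r\bigl(t\cdot[K:L]h_0/r+\beta(t)\bigr)+\veps r$, so the optimal value of the bracket is exactly $\alpha\bigl([K:L]h_0/r\bigr)$ by the definition $\alpha(x)=\min_{0\le t\le 1}\bigl(xt+\beta(t)\bigr)$. The only subtlety is that $r=\rank E$ may be $0$, in which case one cannot divide by it; this is why the statement replaces $r$ by $R=\max(1,\rank E(K))$.

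First I would record that $\beta(t)\ge 0$ on $[0,1]$ (it is a relative entropy type expression that vanishes at $t=1$) and that $\alpha$ is therefore a well-defined, nonincreasing, nonpositive function; in particular $\alpha(x)\le \beta(1)=0$ for all $x\ge 0$, and for the endpoint choices $t=0$, $t=1$ one recovers the two regimes mentioned after Theorem \ref{betaparam}. Next, fix $\veps>0$ small enough for Theorem \ref{betaparam} to apply, and let $t^\ast\in[0,1]$ be a value attaining (or nearly attaining, since the minimum of a continuous function on a compact interval is attained) the minimum defining $\alpha\bigl([K:L]h_0/R\bigr)$. Apply Theorem \ref{betaparam} with this $t=t^\ast$.

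There are two cases. If $\rank E\ge 1$, then $R=\rank E$ and the exponent $t^\ast[K:L]h_0+(\beta(t^\ast)+\veps)\rank E$ is exactly $R\alpha\bigl([K:L]h_0/R\bigr)+\veps R$, as desired; all the remaining polynomial-in-$|S|$, $\veps^{-1}$, $\log h_0$ factors in the bound of Theorem \ref{betaparam} already match (up to absolute constants and the $O_{\veps,[K:L]}$) those in the corollary. If $\rank E=0$, then $E(K)$ modulo torsion is trivial, so the number of $S$-integral points is $O(1)$ (bounded by the size of the torsion subgroup, which is bounded absolutely), while the claimed bound has $R=1$ and exponential factor $e^{\alpha([K:L]h_0)+\veps}\ge 1$ times positive polynomial factors, so the inequality holds trivially. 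Combining the two cases gives the corollary.

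The main obstacle here is essentially bookkeeping rather than a genuine difficulty: one must check that substituting a specific $t^\ast$ into Theorem \ref{betaparam} does not hide any dependence of $\veps$ or of the implied constant on $t^\ast$ — but the convergence in Lemma \ref{kab-lev} is uniform on closed subintervals of $(0,\pi/2)$ and hence, after the $t=1$ boundary case is handled separately (where $\beta(1)=0$ is used directly), the estimate in Theorem \ref{betaparam} is uniform in $t\in[0,1]$, so no such hidden dependence arises. The only other point requiring a word is the degenerate case $\rank E=0$, disposed of as above by the finiteness of torsion. Thus the corollary follows by a direct optimization of the parameter $t$ in Theorem \ref{betaparam}.
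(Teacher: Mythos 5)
Your approach is correct and is exactly what the paper intends: the corollary follows from Theorem \ref{betaparam} by choosing $t$ to minimize the exponent, with $R=\max(1,\rank E)$ introduced only to handle the degenerate case $\rank E=0$ (where the count is bounded by torsion and the claimed bound holds trivially since the right-hand side is $\ge 1$). One small inconsistency worth fixing: you assert that $\alpha$ is ``nonincreasing, nonpositive'' and that $\alpha(x)\le\beta(1)=0$, but in fact $xt+\beta(t)\ge 0$ for all $x,t\ge 0$ (since $\beta\ge 0$), so $\alpha$ is \emph{nonnegative} and, being a pointwise infimum of functions nondecreasing in $x$, is \emph{nondecreasing}; taking $t=1$ only gives $\alpha(x)\le x$, not $\alpha(x)\le 0$. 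Your later, correct use of $\alpha\ge 0$ to dispose of the $\rank E=0$ case contradicts that side remark but is what actually makes the argument go through, so the proof itself is sound.
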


We derive some quantitative bounds on the height of integral points on elliptic curve. We follow exactly the way proposed in \cite{Helfgott2006}. A combintation of a bound of Hajdu-Herendi \cite{Hajdu1998} together with our previous results gives the following.

\begin{corollary}
Let $E$ be an elliptic curve over a field $K$. Let $S$ be a finite set of places of $K$, that contains all places dividing the discriminant of $E$. Then the number of $S$-integral points on $E$ is at most
$$O_{\veps} \left( C ^{\# S} \veps^{-2(\#S+1)} (\log |f|+\log |\Delta |)^2 e^{(\beta(0)+\veps)} \rank E\right),$$
where $C$ is a constant, $f$ is the largest in norm element of $S$, $\Delta$ is the disciminant of $E$.
The calculation gives $\beta(0)=0.2782...$
\end{corollary}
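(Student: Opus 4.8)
The plan is to remove the free parameter $h_0$ from Corollary~\ref{betaparamcor} by inserting an explicit upper bound for the canonical height of an $S$-integral point, and then to specialize the sphere-packing exponent to $t=0$.

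\textbf{Step 1: the height of $S$-integral points.} First I would show that every $S$-integral point $P=(x_P,y_P)$ on $E$ satisfies $\h(P)\le h_0$ for an explicit $h_0=h_0(E,S)$ with $\log h_0\ll\log|f|+\log|\Delta|$, where $f$ is the largest-norm element of $S$. This combines the function-field height bound above (the adaptation of Pacheco~\cite{pacheco98}, the analogue of Hajdu--Herendi~\cite{Hajdu1998}), which over $K=\F_q[T]$ --- where the genus is $0$ --- gives $\h_K(y_P^4/\Delta)\le 48 p^{e}(|S|-2)$ for $p>3$, with the standard comparison between the canonical and the naive height: recall that $|\h^E-\tfrac13 h_y|$ is bounded on $E$, and $h_y(P)$ controls $\h_K(y_P^4/\Delta)$ up to $O(\log|\Delta|)$. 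Tracking the constants and using $\#S\le 1+\deg\Delta\ll\log|\Delta|$, one gets $h_0$ bounded polynomially in $|f|$ and $|\Delta|$, so that $(1+\log h_0)^2\ll(\log|f|+\log|\Delta|)^2$.

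\textbf{Step 2: counting, and the value of $\beta(0)$.} Next I would apply Corollary~\ref{betaparamcor} with this $h_0$, with $R=\max(1,\rank E)$ and $[K:L]=1$ (our $K=\F_q[T]$). Since $\alpha(x)=\min_{0\le t\le 1}(xt+\beta(t))\le\beta(0)$ --- the value at the ``pure sphere-packing'' choice $t=0$ --- the corollary gives that the number of $S$-integral points on $E$ is at most
$$O_{\veps}\!\left(C^{\#S}\,\veps^{-2(\#S+1)}\,\#S\,(1+\log h_0)^2\,e^{(\beta(0)+\veps)\rank E}\right),$$
where for $\rank E=0$ we absorb the torsion and the finitely many $S$-integral points into the implied constant. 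Plugging in the bound for $h_0$ from Step~1 and absorbing the residual $\#S\ll\log|\Delta|$ into $(\log|f|+\log|\Delta|)^2$ produces the claimed estimate. Finally, $\beta(0)$ is read off from Theorem~\ref{betaparam}: $f(0)=\tfrac{\sqrt{(1+0)(3-0)}}{2}=\tfrac{\sqrt3}{2}$, so with $u=\tfrac{1+f(0)}{2f(0)}=\tfrac12+\tfrac{1}{\sqrt 3}$ and $u'=\tfrac{1-f(0)}{2f(0)}=\tfrac1{\sqrt 3}-\tfrac12$ one computes $\beta(0)=u\log u-u'\log u'=0.2782\ldots$ (natural logarithm, matching the $e^{(\cdot)\rank E}$ normalization).

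\textbf{Main obstacle.} The delicate point is Step~1: making the function-field height bound for $S$-integral points fully explicit and uniform --- in particular controlling the wild-ramification exponent $e$, the passage from $\h_K(y_P^4/\Delta)$ (or from the heights of the auxiliary quantities $\eta\in\Xi$) to $\h(P)$, and checking that the dependence on $p$, $|S|$, $|f|$ and $|\Delta|$ collapses to the stated shape. Once that is in hand, Step~2 is a mechanical specialization of Corollary~\ref{betaparamcor} followed by crude bookkeeping.
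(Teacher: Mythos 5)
Your proposal is correct and follows precisely the route the paper indicates: the paper gives no detailed proof of this corollary, simply stating that "a combination of a bound of Hajdu--Herendi together with our previous results gives the following," which is exactly your Step~1 (the Pacheco/Hajdu--Herendi height bound for $S$-integral points, yielding $h_0$ with $\log h_0\ll\log|f|+\log|\Delta|$) followed by your Step~2 (plugging into Corollary~\ref{betaparamcor} and using $\alpha(x)\le\beta(0)$). Your numerical check $\beta(0)=0.2782\ldots$, obtained as the natural-log normalization of the Kabatiansky--Levenshtein exponent $0.40141\ldots\cdot\log 2$ at angle $\pi/3$, is also consistent with the paper.
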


Furthermore, in the same manner as in \cite{Helfgott2006} we obtain the next corollary.
\begin{corollary}
Let $\veps>0$, $E$ be an elliptic curve over a field $K$. Then the number of integral points on $E$ is at most
$$O_{\veps} \left(| \Delta |^{c+\veps}\right),$$
where $\Delta$ is the disciminant of $E$ and the constant $c=\frac{\beta(0)}{\log 2}=0.20070...$
\end{corollary}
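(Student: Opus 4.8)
The plan is to obtain this as a specialization of the preceding corollary — which bounds the number of $S$-integral points on $E$ by $O_{\veps}\big(C^{\#S}\veps^{-2(\#S+1)}(\log|f|+\log|\Delta|)^2\,e^{(\beta(0)+\veps)\rank E}\big)$ — by making the only natural choice of the auxiliary set, namely $S=\{v:v\mid\Delta\}$, the set of places of bad reduction, and then showing that, after replacing $\veps$ by an internal parameter $\veps'$ to be fixed at the very end in terms of the target $\veps$, every factor on the right-hand side is either of size $|\Delta|^{c+O(\veps')}$ or of size $|\Delta|^{o_{\veps'}(1)}$. Since every integral point of $E$ is in particular $S$-integral for this $S$, the left-hand side of the corollary dominates the quantity we want.

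First I would collect the arithmetic-size estimates. Each $v\in S$ corresponds to a distinct monic irreducible factor of $\Delta\in\F_q[T]$, so the largest norm $|f|$ of an element of $S$ satisfies $|f|\le|\Delta|$, whence $(\log|f|+\log|\Delta|)^2\le(2\log|\Delta|)^2$, which is $|\Delta|^{o(1)}$. For the number of such places I would use the elementary bound that a polynomial of degree $D$ over $\F_q$ has at most $O(D\log q/\log D)$ distinct monic irreducible factors — the product of all irreducibles of degree $O(\log_q D)$ being the extremal configuration — so with $D=\deg\Delta$ this gives $\#S\ll\log|\Delta|/\log\log|\Delta|$. Consequently, for fixed $\veps'$, both $C^{\#S}$ and $(\veps')^{-2(\#S+1)}$ equal $\exp\!\big(O_{\veps'}(\log|\Delta|/\log\log|\Delta|)\big)=|\Delta|^{o_{\veps'}(1)}$, and the implicit dependence on the height of the integral points has already been absorbed into the $(\log|f|+\log|\Delta|)^2$ factor at the previous step.

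Next I would insert a rank bound. By the Tate--Milne inequality (\ref{bsdff}) and the easy bound $\rank_{an}E\le b_E=\deg N-4$, combined with $\deg N\le\deg\Delta$ (each local conductor exponent $n_v$ is at most $\ord_v\Delta$ by Ogg's formula), one gets $\rank E\le\deg\Delta=\log_q|\Delta|\le\log_2|\Delta|$ since $q\ge2$. Hence $e^{(\beta(0)+\veps')\rank E}\le|\Delta|^{(\beta(0)+\veps')/\log 2}$, and together with the previous paragraph the number of integral points on $E$ is $O_{\veps'}\big(|\Delta|^{\,\beta(0)/\log 2\,+\,\veps'/\log 2\,+\,o_{\veps'}(1)}\big)$, i.e.\ of the form $|\Delta|^{c+O(\veps')+o_{\veps'}(1)}$ with $c=\beta(0)/\log 2$ as in the statement (numerically via $\beta(0)=0.2782\ldots$). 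Finally, given $\veps>0$, I would choose $\veps'$ small enough and then $|\Delta|$ large enough that $\veps'/\log 2+o_{\veps'}(1)<\veps$, and absorb the finitely many curves below the resulting threshold into the implied constant, obtaining $\#\{\text{integral points}\}\ll_{\veps}|\Delta|^{c+\veps}$.

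The step I expect to be the main obstacle is the second paragraph: one has to be sure that the factors $C^{\#S}$ and $(\veps')^{-2(\#S+1)}$ are genuinely subpolynomial in $|\Delta|$, which rests entirely on the bound $\#S\ll\log|\Delta|/\log\log|\Delta|$ for the number of irreducible divisors of $\Delta$ over $\F_q$, and on keeping the choice of $\veps'$ and the cutoff for $|\Delta|$ uniform in the curve, so that nothing in the final constant depends on $E$ except through $|\Delta|$.
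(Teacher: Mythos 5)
Your plan is the right one and matches what the paper (and Helfgott--Venkatesh, to which the paper defers) intends: take $S$ to be the places of bad reduction, feed the resulting $C^{\#S}\veps^{-2(\#S+1)}(\log|f|+\log|\Delta|)^2$ through the divisor bound $\#S\ll\log|\Delta|/\log\log|\Delta|$ to make those factors $|\Delta|^{o(1)}$, and then control $e^{(\beta(0)+\veps')\rank E}$ with a rank bound of the form $\rank E\ll\log|\Delta|$. The individual steps you give for the $|\Delta|^{o(1)}$ factors (including the case distinction hidden behind ``$q$ fixed vs.\ $q$ large'' in the $\#S$ bound) are sound, and deducing $\rank E\le\deg N-4\le\deg\Delta$ via Tate--Milne, the degree of the $L$-function, and Ogg's formula is legitimate with the tools the paper sets up.

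However, there is a genuine numerical gap that you inherited from the statement without checking: the chain $\rank E\le\deg\Delta=\log_q|\Delta|\le\log_2|\Delta|$ gives $e^{\beta(0)\rank E}\le|\Delta|^{\beta(0)/\log 2}$, and with $\beta(0)=0.2782\ldots$ this is $|\Delta|^{0.4014\ldots}$, \emph{not} $|\Delta|^{0.2007\ldots}$. The displayed value $0.20070\ldots$ in the corollary equals $\beta(0)/\log 4$, which over $\Q$ (Helfgott--Venkatesh's setting) comes from a rank bound of the sharper form $\rank E(\Q)\le\log_4|\Delta|+O(1)$ together with the $t$-optimized version (Corollary \ref{betaparamcor} with $\alpha$ rather than the $t=0$ corollary you invoked). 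Over $\F_q[T]$ the analogous sharpening is not available from the ingredients in the paper: for $q=2$ the bound $\rank E\le\deg N-4\le\deg\Delta$ can be essentially tight, so there is no saving of a factor $\tfrac12$ in the exponent. Thus your argument, as written, proves the corollary with $c=\beta(0)/\log 2\approx 0.4014$, which is consistent with the paper's stated formula $c=\beta(0)/\log 2$ but not with the numerical value $0.20070\ldots$; the statement as printed contains an arithmetic inconsistency, and your proof does not close that gap. To reach $0.2007$ you would need either a rank bound improved by a factor of two, or to run the argument through Corollary \ref{betaparamcor} with an explicit height bound $h_0\le c_h\log|\Delta|$ and minimize $\min_t(c_h t+\beta(t)/\log 2)$, and you would then have to verify that $c_h$ is small enough for this to beat $\beta(0)/\log 2$ --- none of which you do. A secondary, more minor concern: the closing ``absorb finitely many curves into the constant'' step needs the same tacit non-isotriviality hypothesis as the rest of the paper, since for constant curves $\#E(\F_q(T))=\#E(\F_q)$ can grow with $q$ while $|\Delta|$ stays bounded.
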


\section*{Bounds on an algebraic rank}
Here we get the desired bound for an algebraic rank and give a bound for the number of $S$ integral points on $E$ in terms of its conductor. Due to the results of the previous section we have
\begin{equation*}
\begin{split}
\#E(K) &\ll c^{\rank E+m} \leq c^{\rank_{an} E} \\
&\leq \exp \left(\log c \left( \frac{(\deg N-8)\log q}{2 \log \deg N} + O \left(\frac{\deg N \log^2 q}{\sqrt{q} \log^2 \deg N}\right)\right)\right),
\end{split}
\end{equation*} 
where we used the fact that $\rank E \leq \rank_{an} E$ as well as the explicit formula given in Theorem \ref{brumer}.
We see that the term in $O(\cdot)$ is smaller than the main term, so we can simply rewrite
$$\#E(K) \ll c^{\rank E+m} \leq \exp \left(c \frac{\deg N \log q}{ \log \deg N}\right).$$ 

\subsection*{Comparison to Bombieri-Pila type bound}
Let $S$ be the set of all points of bad reduction of an elliptic curve $E/K$. Consider $h_0 > c \max (\deg \Delta, h(j))$, where $\Delta$ is the discriminant and $j$ is the $j$-invariant of $E/K$ for some constant $c$. 
The main contribution to Theorem \ref{betaparam} and, respectively, Corollary \ref{betaparamcor} is given by $e^{R\alpha(h_0/R)}$. The minimum in $\alpha$ is attained to the left of $t=1$.
Since $h_0 > c \deg \Delta$, then $\alpha(h_0/R) < (1-\delta_0)h_0/R$, where $\delta_0$ positive and depending only on $c$. Thus for any $\delta_1 \leq \delta_0$ we obtain a bound
$$\#E(K,S) \ll e^{(1-\delta_1)h_0},$$
while Bombieri-Pila type result brings us to $e^{h_0}$, thus this method gives an improvement in the exponent and also improves the corresponding results from \cite{helf-ven}.

Another possible way to get this sort of bounds is using the work of Bhargava et al. on bounding the size of 2-torsion group, see \cite{Bhargava2017}.
The authors of \cite{Bhargava2017} proved the first nontrivial bounds on the sizes of $2$-torsion subgroups of the class groups of cubic and higher degree number fields. This is also an improvement on the bounds on the number of integral points given in \cite{helf-ven}. They also gave a result for the function fields, see \cite[Theorem 7.1]{Bhargava2017}.  

\subsection*{Acknowledgements}
The author thanks Marc Hindry for useful remarks on the previous version of this paper. 
The author would also like to thank G\"{o}ttingen University for its hospitality while completing presented work.

\bibliography{../library}{}
\bibliographystyle{abbrv}
\end{document}